%% LyX 2.3.6.1 created this file.  For more info, see http://www.lyx.org/.
%% Do not edit unless you really know what you are doing.
\documentclass[smallextended]{svjour3}
%\usepackage[T1]{fontenc}
%\usepackage[latin9]{inputenc}
%\usepackage{units}

%%%%%%%%%%%%%%%%%%%%%%%%%%%%%% User specified LaTeX commands.
\usepackage{lmodern}
\usepackage[dvipsnames]{xcolor}
\usepackage{amsmath}
\usepackage{amssymb, xfrac, hyperref, enumitem, tikz}
\usepackage{blindtext}
\usepackage[linesnumbered,ruled]{algorithm2e}
\usepackage{csvsimple}
\usepackage{booktabs}
\usepackage{multirow}
\usepackage{graphicx}
\graphicspath{{..}}

\pagestyle{plain}

\newcommand{\mbf}[1]{\mathbf{#1}}
\newcommand{\mbb}[1]{\mathbb{#1}}

\newcommand{\mcf}[1]{\mathcal{#1}}
\newcommand{\mbfs}[1]{\boldsymbol{#1}}

\newcommand{\replace}{replace}
\newcommand{\drop}{drop}
\newcommand{\conv}{conv}

\newcommand{\pop}{pop}

\makeatother

\title{Compressing Branch-and-Bound Trees\footnote{A short version of this article was accepted for publication at IPCO 2023~\cite{MPX2023}. This extended version contains more detailed discussions and proofs, and new computational contributions and experiments. }}
\author{Gonzalo Mu\~noz$^{\star}$ \and Joseph
Paat$^{\dagger}$ \and \'{A}linson S. Xavier$^{\ddagger}$}
\institute{${\star}$ Institute of Engineering Sciences, Universidad de O'Higgins, Rancagua, Chile\\
\email{gonzalo.munoz@uoh.cl}\\
${\dagger}$ Sauder School of Business, University of British Columbia, Vancouver
BC, Canada,\\
\email{joseph.paat@sauder.ubc.ca}\\
${\ddagger}$ Energy Systems and Infrastructure Analysis Division, Argonne National Laboratory, Lemont, IL, USA\\
\email{axavier@anl.gov}
}

\begin{document}
\maketitle

\begin{abstract}
A branch-and-bound (BB) tree certifies a dual bound on the value of an integer program.
In this work, we introduce the tree compression problem (TCP): \emph{Given a BB tree $T$ that certifies a dual bound, can we obtain a smaller tree with the same (or stronger) bound by either (1) applying a different disjunction at some node in $T$ or (2) removing leaves from $T$?}
We believe such post-hoc analysis of BB trees may assist in identifying helpful general disjunctions in BB algorithms.
We initiate our study by considering computational complexity and limitations of TCP.
We then conduct experiments to evaluate the compressibility of realistic branch-and-bound trees generated by commonly-used branching strategies, using both an exact and a heuristic compression algorithm.
\end{abstract}

%%%%%%%%%%%%%%%%%%%%%%%%%%%%%%%%%%%%%%%%%%%%
%%%%%%%%%%%%%%%%%%%%%%%%%%%%%%%%%%%%%%%%%%%%
\section{Introduction}
Consider an integer linear programming (IP) problem
\begin{equation}\label{eqIP}
\min \{\mbf{c}^\top \mbf{x}:\ \mbf{x} \in \mcf{P} \cap \mbb{Z}^n \},
\end{equation}
where $\mbf{c} \in \mbb{Q}^n$ and
\(
\mcf{P} := \left\{\mbf{x} \in \mbb{R}^n:\ \mbf{A} \mbf{x} \le \mbf{b}\right\}
\)
for $\mbf{A} \in \mbb{Q}^{m\times n}$ and $\mbf{b} \in \mbb{Q}^m$.
Primal bounds on~\eqref{eqIP} can be certified by integer feasible solutions $\mbf{z} \in \mcf{P} \cap \mbb{Z}^n$.
Dual bounds on~\eqref{eqIP}, on the other hand, are typically certified using {\bf branch-and-bound (BB) trees}. 
%
%We consider the question of compressing a tree certifying a dual lower bound into a tree with fewer nodes.
% 
A BB tree is a graph-theoretical tree $T$ where each node $v$ corresponds to a polyhedron $\mcf{Q}(v)$, with the root corresponding to $\mcf{P}$.
Moreover, $v$ is either a leaf, or it has exactly two children corresponding to the polyhedra defined by applying a disjunction $(\mbfs{\pi}^\top \mbf{x} \le \pi_0) \lor (\mbfs{\pi}^\top \mbf{x} \ge \pi_0+1)$ to $\mcf{Q}(v)$, where we call $\mbfs{\pi} \in \mbb{Z}^n$ the {\bf branching direction} and $\pi_0 \in \mbb{Z}$.
If we solve the corresponding linear programs over all leaves of $T$, then the smallest value obtained over all leaves yields a dual bound for~\eqref{eqIP}. 
See Section~\ref{secTCP} for a formal definition of BB trees and the dual bound.

In order to generate a BB tree, one must identify a strategy for selecting a leaf of the tree and a strategy for selecting a disjunction to apply. 
See~\cite{LS1999} for a survey on different strategies. 
In practical implementations of the BB method, the only allowed directions are typically $\{\mbf{e}^1,\dotsc, \mbf{e}^n\}$, in which case we say the algorithm uses {\it variable disjunctions}.
However, many results explore the benefit of additional directions: 
various subsets of $\{-1,0,1\}^n$ are explored in~\cite{MC2013,OM2001,YBS2021}; directions derived from mixed integer Gomory cuts are explored in~\cite{CLN2011,KC2011}; directions derived using basis reduction techniques are explored in~\cite{AL2004,ML2010}; Mahajan and Ralphs~\cite{MR2009} solve a subproblem to find a disjunction that closes the duality gap by a certain amount.
The largest set of directions is the set $\mbb{Z}^n$, in which case the algorithm uses {\it general disjunctions}.

Although a larger set of allowable directions provides more flexibility, it has been repeatedly verified that searching through this set during the execution of the algorithm can be computationally expensive~\cite{GMBGS2015,MR2009}.
The work in this paper follows a different approach to identify meaningful directions. 
Given a tree $T$ produced using some set of allowable directions $\mcf{D} \subseteq \mbb{Z}^n$, we ask if $T$ can be ``compressed'' into a smaller tree with the same (or stronger) dual bound by using a potentially larger set of directions $\mcf{D}' \supseteq \mcf{D}$, and a limited set of transformations.
This post-hoc compression analysis is more restricted and allows one to use a global view of the tree to identify potentially meaningful branching directions, as opposed to the dynamic approach.

The motivation behind this compression question comes from recent trends to better understand BB trees, in particular, to study how hard it is to generate small trees \cite{GM2022}, how big the trees produced by certain branching rules can be \cite{DDMS2021}, and how we can learn good but expensive branching disjunctions \cite{gasse2019exact}.
We believe that by successfully compressing a BB tree that was produced by state-of-the-art methods, we can (a) find practical ways of producing small trees to be used as effective dual certificates \cite{cheung2017verifying}, (b) identify strong general disjunctions for a family of instances and (c) produce training data for learn-to-branch strategies. With this in mind, we focus our work on the theory and practical approaches to compressing branching trees.

\smallskip
\noindent{\bf Related work.}
To the best of our knowledge, this is the first piece of work to study the tree compression problem.
A related question is the minimum size of a BB tree certifying optimality or infeasibility of~\eqref{eqIP}; we use some of these results in our own work.
%
%In our work, we show that some of these known results can be used to limit how much a BB tree can be compressed. 
%
Chv\'{a}tal~\cite{C1980} and Jeroslow~\cite{J1974} give examples of IPs that require a BB tree whose size is exponential in the number of variables $n$ when only variable directions $ \{\mbf{e}^1,\dotsc, \mbf{e}^n\}$ are used to generate disjunctions.
There are examples where an exponential lower bound in $n$ cannot be avoided even with general disjunctions~\cite{DT2020,DDM2022}.
Basu et al.~\cite{BCDSJ2021} consider the set $\mcf{D}_s$ of directions whose support is at most $s$; they prove that if $s \in O(1)$, then a BB tree proving infeasibility of Jeroslow's instance has exponential in $n$ many nodes~\cite{BCDSJ2021}.
For an interesting perspective on provable upper bounds, Dey et al.~\cite{DDMS2021} relate the size of BB trees generated using full strong branching and variable disjunctions to the additive integrality gap for certain classes of instances like vertex cover. 

Pfetsch et al.~\cite{GM2022} show that it is NP-hard to find the smallest BB tree generated using only variable disjunctions.
Mahajan and Ralphs~\cite{MR2010} show that it is NP-complete to decide whether there exists a general disjunction proving infeasibility at the root node. 
They also provide a MIP that can be solved at a node in a BB tree to yield a disjunction maximizing the dual bound improvement.

The tree compression problem is a post-hoc analysis of a BB tree. 
A similar kind of analysis is done in backdoor branching, where one explores a tree $T$ to find small paths from the root to the optimal solution with the ultimate aim to identify good branching decisions to make next time the algorithm is run on a similar IP~\cite{FM2012,KVD2022}.
The major difference between backdoor branching and the compression question is that the former only considers finding a path in a tree while the latter considers how to modify a tree to create short paths.
Another form of post-hoc analysis is tree balancing, where the goal is to transform a tree $T$ proving integer infeasibility into a new tree with the same dual bound whose size is polynomial in $|T|$ and whose depth is polylogarithmic in $|T|$; see, e.g.,~\cite{BNIKPPR20118} for a discussion on balancing and stabbing planes.
A major difference between the balancing question and the compression question is that the former is allowed to grow the tree along branches while the latter is not.

\medskip

\noindent{\bf Contributions.}
We introduce the tree compression problem in Section~\ref{secTCP}.
In Theorem~\ref{thNPComplete}, we show that the problem is NP-Complete when $\mcf{D} = \mbb{Z}^n$ and $\mbf{c} = \mbf{0}$.
We then demonstrate in Theorem~\ref{thLowerBound} that tree compression does not always give the smallest BB tree meeting a certain dual bound. 
In fact, we give an example of a BB tree $T$ of size $|T| \ge 2^{n+1}-1$ that cannot be compressed to a BB tree with fewer than $\sfrac{(2^n-1)}{n}$ nodes, yet there is a different BB tree with the same root and dual bound with only $7$ nodes.
These results appear in Section~\ref{secTheory}.

From a more practical standpoint, we also provide extensive computational results on the compression problem.
We first look at BB trees from MIPLIB 3.0~\cite{BBI1992} instances generated using \emph{full strong branching}, the state-of-the-art variable branching strategy with respect to tree size, and \emph{reliability branching with plunging}, often considered the state-of-the-art branching strategy with respect to running time.
We first compress these trees using a computationally-expensive exact algorithm based on a MIP formulation by Mahajan and Ralphs~\cite{MR2009,MR2010}.
We then evaluate how much of this compression is achievable in a short amount of time, by applying a heuristic algorithm based on the iterative procedure introduced by Owen and Mehrota~\cite{OM2001}.
Overall, we see that many MIPLIB 3.0 trees can be significantly compressed.
Moreover, we find that the heuristic procedure achieves good compression.
These algorithms and results are described in Sections~\ref{secMethods} and ~\ref{secExperiments}, respectively.

Finally, we consider the more challenging instances of MIPLIB 2017 \cite{miplib2017}. We propose various node processing rules in the compression heuristic and show via extensive computational experiments that some strategies can produce considerably smaller trees in moderate running times. These experiments are described in Section \ref{sec:miplib2017}.

\section{The tree compression problem (TCP)}\label{secTCP}

We define a {\bf branch-and-bound (BB) tree} as a graph-theoretical rooted tree where each node $v$ corresponds to a polyhedron $\mcf{Q}(v)$, and the root node $r$ corresponds to $\mcf{Q}(r) = \mcf{P}$.
Furthermore, each node $v$ is either a leaf, or it has exactly two children corresponding to the polyhedra
\begin{equation}\label{eqDisjunct}
\mcf{Q}(v) \cap \{\mbf{x} \in \mbb{R}^n:\ \mbfs{\pi}^\top \mbf{x} \le \pi_0\}
\quad\text{and}\quad
\mcf{Q}(v) \cap \{\mbf{x} \in \mbb{R}^n:\ \mbfs{\pi}^\top \mbf{x} \ge \pi_0+1\},
\end{equation}
where $\mbfs{\pi} \in \mbb{Z}^n$ is called the {\bf branching direction} and $\pi_0 \in \mbb{Z}$.
The {\bf dual bound} relative to $\mbf{c} \in \mbb{Q}^n$ provided by a BB tree $T$ is
\[
d(T, \mbf{c}) := \min_{ v \in L(T)}\ \min \{\mbf{c}^\top \mbf{x} :\ \mbf{x} \in \mcf{Q}(v)\},
\]
where $L(T)$ is the set of leaves of $T$.
If $\mcf{Q}(v) = \emptyset$ for some $v \in L(T)$, then set $ \min \{\mbf{c}^\top \mbf{x} :\ \mbf{x} \in \mcf{Q}(v)\} := \infty$.
Define $d(T, \mbf{c}) = \infty$ if $\mcf{Q}(v) = \emptyset$ for each $v \in L(T)$, and $d(T, \mbf{c}) = -\infty$ if $\mbf{x} \mapsto \mbf{c}^\top \mbf{x}$ is unbounded from below over $\mcf{Q}(v)$ for some $v \in L(T)$.
For simplicity, our definition allows BB trees that have multiple nodes corresponding to the same polyhedron, although such trees would typically not be generated by well-designed BB algorithms.
We also do not require the tree to certify infeasibility or optimality of~\eqref{eqIP}; this allows for trees generated by partial (e.g. time- or node-limited) runs of the BB method.

Let $T$ be a BB tree and $v \in T$ be a non-leaf node.
Our notion of compression is based on two operations on $T$.
For $(\mbfs{\pi}, \pi_0) \in \mbb{Z}^n\times \mbb{Z}$, let
\[
\replace(T,v,\mbfs{\pi}, \pi_0)
\]
denote the BB tree obtained from $T$ by replacing all descendants of $v$ with the two new children defined by applying the disjunction $(\mbfs{\pi}^\top \mbf{x} \le \pi_0) \lor (\mbfs{\pi}^\top \mbf{x} \ge \pi_0+1)$ to $\mcf{Q}(v)$, i.e., the two new children are the polyhedra in~\eqref{eqDisjunct}.
We use 
\[
\drop(T,v)
\]
to denote the BB tree obtained from $T$ by removing all descendants of $v$.

We refer to the number of nodes in $T$ as the {\bf size} of $T$ and denote it by $|T|$.
A BB tree $T'$ is a {\bf compression of $T$} if there exists a sequence of BB trees $T_1 = T, T_2, \dotsc, T_k = T'$ such that for each $i \in \{2, \dotsc, k\}$ we have
\begin{enumerate}
    \item\label{compress1} Either $T_i = \drop(T_{i-1},v)$ for some $v \in T_{i-1}$, or $T_i = \replace(T_{i-1},v, \mbfs{\pi}, \pi_0)$ for some $v \in T_{i-1}$ and $(\mbfs{\pi}, \pi_0) \in \mbb{Z}^n\times \mbb{Z}$.
    
    \smallskip
    \item\label{compress2} $|T_i| < |T_{i-1}|$ and $d(T_i, \mbf{c}) \ge d(T_{i-1}, \mbf{c})$.
\end{enumerate}
The definition of compression depends on the dual bound of $T$. 
Also, observe that the replacement operation only acts on non-leaf nodes and thus only produces children of non-leaf nodes.
Consequently, leaf nodes of a BB tree will either remain leaf nodes or disappear from the tree during the compression process.
Given that the replacement operation creates two new nodes that are leaves themselves, the previous discussion implies that any new %(potentially dense) 
disjunctions introduced in the compression process appear near the bottom of the tree.

As an example of these definitions, consider $\mcf{P} := [0,\sfrac{1}{5}]^2$ and the BB tree $T$ depicted in Figure \ref{fig:bbtree}.
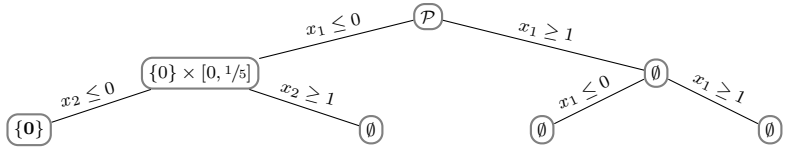
\begin{figure}
\begin{center}
    \begin{tikzpicture}[scale = .75, every node/.style={scale=0.85}]
    \node(p)[draw = black!50, rectangle, rounded corners, thick] at (0,0){$\mcf{P}$};
    \node(q1)[draw = black!50, rectangle, rounded corners, thick] at (-4,-1){$\{0\}\times [0,\sfrac{1}{5}]$};
    \node(q2)[draw = black!50, rectangle, rounded corners, thick] at (4,-1){$\emptyset$};
    \node(q3)[draw = black!50, rectangle, rounded corners, thick] at (-7,-2){$\{\mbf{0}\}$};
    \node(q4)[draw = black!50, rectangle, rounded corners, thick] at (-1,-2){$\emptyset$};
    \node(q5)[draw = black!50, rectangle, rounded corners, thick] at (2,-2){$\emptyset$};
    \node(q6)[draw = black!50, rectangle, rounded corners, thick] at (6,-2){$\emptyset$};
    %\node(q5)[draw = none, rectangle, rounded corners, thick] at (-1,-2){$\mcf{Q}_5$};
    \draw(p) to node[rotate = 12, above, pos = .5]{$x_1 \le 0$}(q1);
    \draw(p) to node[rotate = -12, above, pos = .5]{$x_1 \ge 1$}(q2);
    \draw(q1) to node[rotate = 15, above, pos = .6]{$x_2 \le 0$}(q3);
    \draw(q1) to node[rotate = -15, above, pos = .5]{$x_2 \ge 1$}(q4);
    \draw(q2) to node[rotate = 25, above, pos = .6]{$x_1 \le 0$}(q5);
    \draw(q2) to node[rotate = -25, above, pos = .5]{$x_1 \ge 1$}(q6);
    %\draw(q1)--(q4);
    \end{tikzpicture}
    \end{center}
    \caption{Example of BB tree. Here, $\mcf{P} := [0,\sfrac{1}{5}]^2$, disjunctions are indicated on edges and polyhedra in the nodes.} \label{fig:bbtree}
\end{figure}
Note that we allow a BB tree to have disjunctions at empty nodes, and disjunctions may be repeated. 
Let $\mbf{c} = (-1,-1)$; we have $d(T,\mbf{c}) =0 $.
We can compress $T$ with the drop operation at the right child $v_2$ of the root $r$; see Figure \ref{fig:bbtreecompressed}(a).
We can also compress $T$ with the replace operation at the root with $\mbfs{\pi} = -\mbf{c}$ and $\pi_0 =0 $; see Figure~\ref{fig:bbtreecompressed}(b).
It can be checked that $d(\drop(T, v_2), \mbf{c}) = d(\replace(T, r, \mbfs{\pi}, 0), \mbf{c}) = 0$.

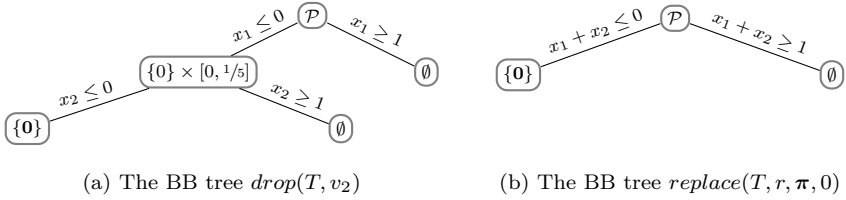
\begin{figure}
\begin{center}
\begin{tabular}{c@{\hskip .75 cm}c}
    \begin{tikzpicture}[scale = .75, every node/.style={scale=0.85}]
    \node(p)[draw = black!50, rectangle, rounded corners, thick] at (0,0){$\mcf{P}$};
    \node(q1)[draw = black!50, rectangle, rounded corners, thick] at (-2,-1){$\{0\}\times [0,\sfrac{1}{5}]$};
    \node(q2)[draw = black!50, rectangle, rounded corners, thick] at (2,-1){$\emptyset$};
    \node(q3)[draw = black!50, rectangle, rounded corners, thick] at (-5,-2){$\{\mbf{0}\}$};
    \node(q4)[draw = black!50, rectangle, rounded corners, thick] at (.5,-2){$\emptyset$};
    \draw(p) to node[rotate = 25, above, pos = .5]{$x_1 \le 0$}(q1);
    \draw(p) to node[rotate = -25, above, pos = .5]{$x_1 \ge 1$}(q2);
    \draw(q1) to node[rotate = 15, above, pos = .6]{$x_2 \le 0$}(q3);
    \draw(q1) to node[rotate = -20, above, pos = .6]{$x_2 \ge 1$}(q4);
    \end{tikzpicture}
    &
    \begin{tikzpicture}[scale = .75, every node/.style={scale=0.85}]
    \node(p)[draw = black!50, rectangle, rounded corners, thick] at (0,0){$\mcf{P}$};
    \node(q1)[draw = black!50, rectangle, rounded corners, thick] at (-2.75,-1){$\{\mbf{0}\}$};
    \node(q2)[draw = black!50, rectangle, rounded corners, thick] at (2.75,-1){$\emptyset$};
    \node(q4)[draw = none, rectangle, rounded corners, thick] at (0,-2){\phantom{$\emptyset$}};
    \draw(p) to node[rotate = 20, above, pos = .5]{$x_1+x_2 \le 0$}(q1);
    \draw(p) to node[rotate = -20, above, pos = .5]{$x_1+x_2 \ge 1$}(q2);
    %\draw(q1)--(q4);
    \end{tikzpicture}\\[.25 cm]
    (a) The BB tree $\drop(T, v_2)$
    &
    (b) The BB tree $\replace(T, r, \mbfs{\pi}, 0)$
    \end{tabular}
\end{center}
\caption{Example of different compressions of the tree $T$ in Figure \ref{fig:bbtree}.} \label{fig:bbtreecompressed}
\end{figure}

For an example of an invalid compression operation, consider replacing $\sfrac{1}{5}$ in the original example by $\sfrac{1}{2}$.
After this replacement, $\replace(T, r, \mbfs{\pi}, 0)$ would no longer be a compression because it would deteriorate the lower bound to $d(\replace(T, r, \mbfs{\pi}, 0), \mbf{c}) = -1$; the rightmost branch of Figure~\ref{fig:bbtreecompressed} (b) would not be empty, as it would contain the point $(\sfrac{1}{2},\sfrac{1}{2})$ which is the optimal solution for that node.

The original example illustrates that strict dual improvement is not necessary in the compression process.
However, it is possible for the dual bound to improve during the compression process. 
For instance, consider replacing $\mcf{P}$ in the example with the triangle with vertices $(-\sfrac{1}{2},-\sfrac{1}{2})$, $(-\sfrac{1}{2},1)$, $(1, -\sfrac{1}{2})$; still use $\mbf{c} = (-1,-1)$.
If we use the same disjunctions as in Figure \ref{fig:bbtree}, then we produce a BB tree $T'$, which has a dual bound of $-\sfrac{1}{2}$, obtained at $(\sfrac{-1}{2},1)$ and $(1, -\sfrac{1}{2})$.
One can also check that $d(\replace(T', r, \mbfs{\pi}, 0), \mbf{c}) = 0$. 
Hence, compression can improve the dual bound.

We now formally define the compression problem.

\begin{definition}
The {\bf tree compression problem (TCP)} with respect to a set of allowable directions $\mcf{D}$ is defined as follows:
{\it Given a BB tree $T$ and an objective vector $\mbf{c} \in \mbb{Q}^n$, is there a compression of $T$ where the replacement operation only uses branching directions in $\mcf{D}$?}
\end{definition}

There is an optimization version of this question in which we try to compress $T$ as much as possible. 
Section~\ref{secTheory} considers the decision problem (showing this is NP-Complete) and the optimization problem (showing limitations of compression). 
Our computational results in Sections~\ref{secMethods}, ~\ref{secExperiments} and \ref{sec:miplib2017} consider the optimization problem.
As seen in the previous example, the choice of $\mcf{D}$ influences the compression question; the BB tree in Figure \ref{fig:bbtreecompressed}(a) is the best compression if $\mcf{D}$ only contains unit vectors while the BB tree in Figure \ref{fig:bbtreecompressed}(b) is the best compression if $\mcf{D}$ contains the all-ones vector.

%%%%%%%%%%%%%%%%%%%%%%%%%%%%%%%%%%%%%%%%%%%%
%%%%%%%%%%%%%%%%%%%%%%%%%%%%%%%%%%%%%%%%%%%%
\section{Complexity results and lower bounds}\label{secTheory}

In this section, we show (TCP) is NP-Complete when $\mcf{D} = \mbb{Z}^n$ and $\mbf{c} = \mbf{0}$.
Our proof uses a reduction from the following problem.

\begin{definition}
The problem of {\bf disjunctive infeasibility (DI)} is defined as follows:
{\it Given $\mbf{A} \in \mbb{Q}^{m\times n}$ and $\mbf{b} \in \mbb{Q}^m$ that define a polyhedron $\mcf{S} = \{\mbf{x}\in \mbb{R}^n: \mbf{A}\mbf{x}\le\mbf{b}\}$, decide if there exists $\mbfs{\pi} \in \mbb{Z}^n\setminus\{\mbf{0}\}$ and $\pi_0 \in \mbb{Z}$ such that}
\[
\mcf{S} \subseteq \{\mbf{x} \in \mbb{R}^n: \pi_0 < \mbfs{\pi}{}^\top \mbf{x} < \pi_0+1\}.
\]
\end{definition}
(DI) was proved to be NP-Complete in \cite[Proposition 3.2]{MR2010}.
Keep in mind that the input to (DI) is a single polyhedron whereas the input to (TCP) is an entire BB tree.
Also note that, although the two problems are related, (DI) considers operating on a single polyhedron, whereas the compression can be accomplished at any node in the BB tree.

\begin{theorem}\label{thNPComplete}
{\rm (TCP)} is NP-Complete when $\mcf{D} = \mbb{Z}^n$ and $\mbf{c} = \mbf{0}$.
\end{theorem}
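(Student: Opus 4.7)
The plan is to establish that (TCP) is both in NP and NP-hard, with NP-hardness via reduction from (DI).

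For NP membership, I would take a certificate to be a sequence of valid compression steps $T = T_1,T_2,\dots,T_k$ with $|T_k| < |T|$. Because each step strictly decreases size, $k \le |T|$. Each step is specified by a node and, in the replace case, a direction $(\mbfs{\pi},\pi_0) \in \mbb{Z}^n \times \mbb{Z}$. I would bound the encoding size of any useful $(\mbfs{\pi},\pi_0)$ polynomially in the input, leveraging the fact that (DI) is in NP~\cite{MR2010} (which provides polynomial-size witnesses for single-split infeasibility). Verifying each step amounts to polyhedral emptiness checks (LP feasibility) plus size comparisons, both in P.

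The key reduction observation is that with $\mbf{c} = \mbf{0}$ one has $d(T,\mbf{0}) \in \{0,\infty\}$: it equals $0$ iff some leaf has a non-empty polyhedron, and $\infty$ iff every leaf is empty. If $d(T,\mbf{0}) = 0$ and $|T| \ge 3$, then dropping any internal node is valid (the new dual bound remains in $\{0,\infty\} \supseteq \{\ge 0\}$), so (TCP) is trivially YES. The interesting case is $d(T,\mbf{0}) = \infty$; under this constraint, a drop at $v$ is valid only if $\mcf{Q}(v) = \emptyset$, and a replace at $v$ with $(\mbfs{\pi},\pi_0)$ is valid only if $(\mbfs{\pi},\pi_0)$ is a (DI) witness for $\mcf{Q}(v)$.

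Given a (DI) instance $\mcf{S}$, I would construct in polynomial time a BB tree $T$ satisfying: \textbf{(a)} every leaf has empty polyhedron, so $d(T,\mbf{0}) = \infty$; \textbf{(b)} every internal node has a non-empty polyhedron, forbidding drops; \textbf{(c)} every non-root subtree has size at most $3$, so non-root replaces cannot strictly shrink it; and \textbf{(d)} a replace at the root is valid if and only if (DI) on $\mcf{S}$ has a solution. Together (a)--(d) yield TCP = YES iff DI($\mcf{S}$) = YES. Concretely, I would realize $T$ as a depth-$2$ tree of size $7$: take the root polyhedron to be $\mcf{S}$ augmented with auxiliary integer variables set to non-integer values so the root is automatically integer-infeasible, split the root into two non-empty children, and give each child two empty leaves via splits on the auxiliary variables that are immediate to construct.

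The main obstacle is ensuring (d). A naive embedding like $\mcf{S} \times \{1/2\}$ introduces a trivial (DI) witness for the root (split on the auxiliary variable), which would force TCP to be YES independently of DI($\mcf{S}$). I would resolve this by designing the auxiliary polyhedron so that width-based considerations force the auxiliary component of any (DI) witness for the root to be zero, and hence such a witness projects to a (DI) witness for $\mcf{S}$ itself. Combining this with straightforward handling of degenerate cases (e.g.\ $\mcf{S} = \emptyset$, where DI is vacuously YES and the output can be taken trivial), one obtains TCP = YES $\Leftrightarrow$ a size-reducing replace at the root exists $\Leftrightarrow$ DI($\mcf{S}$) = YES, completing the reduction.
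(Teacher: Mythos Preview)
Your proposal is correct and follows essentially the same route as the paper: reduce from (DI), build a depth-$2$ size-$7$ tree whose only size-reducing operation is a replace at the root, and use a width-$1$ auxiliary direction to force the auxiliary component of any root (DI) witness to vanish. The paper's concrete realization takes $\mcf{P} = \conv\big(\{(\mbf{x}^*,0),(\mbf{x}^*,1)\}\cup(\mcf{S}\times\{1/2\})\big)$ for a single non-integer point $\mbf{x}^*\in\mcf{S}$, branching first on the auxiliary coordinate and then on a fractional coordinate of $\mbf{x}^*$ (rather than on auxiliary variables at the second level), but this is the same mechanism you outline.
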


Before moving to the proof, we note that  
(TCP) can be answered in polynomial time if the set $\mcf{D}$ of directions allowed in the replacement operation is finite and polynomial in the encoding size of $T$, e.g., $\mcf{D} = \{\mbf{e}^1,\dotsc, \mbf{e}^n\}$.
Indeed, one can try the drop operation at each node and the replace operation for each node-direction pair $(v, \mbf{d})$; this requires polynomial time due to the size of $\mcf{D}$.

\begin{proof}
We first argue that (TCP) is in NP when $\mcf{D} = \mbb{Z}^n$ and $\mbf{c} = \mbf{0}$.
Let $T$ be a BB tree that can be compressed. 
Either $d(T, \mbf{0}) = 0$, which happens if $\mcf{Q}(v) \neq \emptyset$ for some $v \in L(T)$, or $d(T, \mbf{0}) = \infty$, which happens if $\mcf{Q}(v)= \emptyset$ for all $v \in L(T)$.
We need to argue that there is a certificate (in the form of a BB tree $T'$) whose encoding size is polynomially bounded by the encoding size of $T$; checking that $T'$ is a compression can be done by checking feasibility of the leaves of $T'$, which can be done in polynomial time as each problem is a linear program. 

Suppose $d(T, \mbf{0}) = 0$.
Then the BB tree $T' = \drop(T,r)$ is non-empty because the assumption $d(T, \mbf{0}) = 0$ implies $\mcf{Q}(v) \neq \emptyset$ for some $v \in L(T)$.
Moreover, because $T$ is compressible and compression can only happen at non-leaf nodes,
%\alinson{non-leaf?}, 
it must be the case that $T$ consists of more than just the root $r$.
Hence, $|T'| < |T|$ and $d(T, \mbf{0})=d(T', \mbf{0}) = 0$.
Thus, $T'$ is a certificate of compressibility.

Suppose that $d(T, \mbf{0}) = \infty$.
Thus, $\mcf{Q}(v)= \emptyset$ for all $v \in L(T)$.
If a non-leaf node $v$ of $T$ satisfies $\mcf{Q}(v) = \emptyset$, then $T' = \drop(T,v)$ is a compression of $T$ whose size is polynomial in the size of $T$.
So, suppose that $\mcf{Q}(v) \neq \emptyset$ for all non-leaf nodes of $T$.
Since $T$ can be compressed (and the drop operation cannot be used from the previous sentences), there exists a non-leaf $v \in T$ and $(\mbfs{\pi}, \pi_0) \in \mbb{Z}^n\times \mbb{Z}$ such that applying the disjunction $(\mbfs{\pi}^\top \mbf{x} \le \pi_0) \lor (\mbfs{\pi}^\top \mbf{x} \ge \pi_0+1)$ to $\mcf{Q}(v)$ will yield two empty polyhedra.
In demonstrating that (DI) is in NP, Mahajan and Ralphs prove that $(\mbfs{\pi}, \pi_0)$ can be chosen to have encoding size polynomial in the encoding size of $\mcf{Q}(v)$~\cite[\S 3]{MR2010}.
Hence, there is a compression $T' = \replace(T, v,\mbfs{\pi}, \pi_0)$ of $T$ whose encoding size is polynomial in the encoding size of $T$.
This shows that (TCP) is in NP when $\mcf{D} = \mbb{Z}^n$ and $\mbf{c} = \mbf{0}$.\\

We now proceed to reduce (DI) to (TCP). Consider an instance $(\mbf{A}, \mbf{b})$ of (DI). 
Let $\mbf{x}^* \in \mcf{S} \setminus \mbb{Z}^n$; this can be found in polynomial time unless $\mcf{S}$ is empty (in which case the answer to (DI) is `yes') or a single integer vector (in which case the answer is `no').
Without loss of generality, $x^*_1 \not \in \mbb{Z}$.

We lift $\mcf{S}$ into $\mbb{R}^{n+1}$ to create an instance of (TCP).
We write a point in $\mbb{R}^{n+1}$ as $(\mbf{x}, y) \in \mbb{R}^n\times \mbb{R}$.
Define
\[
\mcf{P} := \conv \left(\left\{(\mbf{x}^*,0), (\mbf{x}^*,1)\right\} \cup \left\{(\mbf{x}, \sfrac{1}{2}):\ \mbf{x} \in \mcf{S}\right\}\right)
\]
We build a BB tree $T$ with root node $r$ and $\mcf{Q}(r) = \mcf{P}$.
Branch on the disjunction $(y \le 0) \lor (y \ge 1)$ at $r$ to obtain $v_1$ and $v_2$:
\[
\begin{array}{rclcl}
\mcf{Q}(v_1) & := & \{(\mbf{x}, y) \in \mcf{P}:\ y \le 0 \} &=& \{(\mbf{x}^*, 0)\}\\
\mcf{Q}(v_2) & := & \{(\mbf{x}, y) \in \mcf{P}:\ y \ge 1 \} &=& \{(\mbf{x}^*, 1)\}.
\end{array}
\]
Branch on $v_1$ and $v_2$ using $(x_1 \le \lfloor x^*_1\rfloor) \lor (x_1 \ge \lceil x^*_1\rceil)$ to obtain $v_3,v_4, v_5, v_6$: 
\[
\begin{array}{rclcl}
\mcf{Q}(v_3) & := & \{(\mbf{x}, y) \in \mcf{P}:\ y \le 0~\text{and}~x_1 \le \lfloor x^*_1 \rfloor\} &=& \emptyset\\
\mcf{Q}(v_4) & := & \{(\mbf{x}, y) \in \mcf{P}:\ y \le 0~\text{and}~x_1 \ge \lceil x^*_1\rceil \} &=& \emptyset\\
\mcf{Q}(v_5) & := & \{(\mbf{x}, y) \in \mcf{P}:\ y \ge 1~\text{and}~x_1 \le \lfloor x^*_1 \rfloor\} &=& \emptyset\\
\mcf{Q}(v_6) & := & \{(\mbf{x}, y) \in \mcf{P}:\ y \ge 1~\text{and}~x_1 \ge \lceil x^*_1\rceil \} &=& \emptyset.
\end{array}
\]
$T$ has $7$ nodes, and the polyhedra corresponding to the four leaves $v_3, v_4, v_5, v_6$ are empty.
The encoding size of $T$ is polynomial in the encoding size of $\mcf{S}$.
%
%We claim (DI) applied to input $\mbf{A}$ and $\mbf{b}$ has a `yes' answer if and only if (TCP) for $T$ has a `yes' answer. 
%

If (DI) has a `yes' answer with certificate $\mbfs{\pi} \in \mbb{Z}^n \setminus \{\mbf{0}\}$ and $\pi_0 \in \mbb{Z}$, then
\[
\mcf{P} \subseteq \mcf{S}\times \mbb{R} \subseteq \{(\mbf{x}, y)\in \mbb{R}^n \times \mbb{R}: \pi_0 < \mbfs{\pi}^\top \mbf{x}< \pi_0+1\}.
\]
% \begin{array}{rlcl}
% &\{(\mbf{x}, y)\in \mcf{P}: \mbfs{\pi}^\top \mbf{x} +0y \le \pi_0 ~\text{or}~ \mbfs{\pi}^\top \mbf{x} +0y \ge \pi_0+1\}\\
% %
% \subseteq &\{(\mbf{x}, y)\in \mcf{Q}\times \mbb{R}: \mbfs{\pi}^\top \mbf{x} \le \pi_0 ~\text{or}~ \mbfs{\pi}^\top \mbf{x} \ge \pi_0+1\}&=&\ \emptyset.
% \end{array}
% \]
%
Hence, the answer to (TCP) is `yes' because $\replace(T,r, (\mbfs{\pi},0), \pi_0)$ is a compression of $T$.
Assume (TCP) has a `yes' answer.
%
%There are two operations that can compress a tree at a non-leaf node: the drop operation and the replace operation.
%
The drop operation can only be applied to $r, v_1$ or $v_2$, and doing so to any of these does not compress the tree because the dual bound decreases. 
So, the `yes' answer must come from the replace operation.
In order to decrease the size of the tree, which is required for compression, the replace operation must be applied at $r$.
Therefore, there is a non-zero $(\mbfs{\pi}, \pi_{n+1}) \in \mbb{Z}^n\times \mbb{Z}$ and some $\pi_0 \in \mbb{Z}$ such that 
\[\pi_0 <  \mbfs{\pi}^\top \mbf{x} + \pi_{n+1}y < \pi_0+1 \text{ for all } (\mbf{x}, y) \in \mcf{P}.\]
Note that $\mbfs{\pi} \neq \mbf{0}$ and $\pi_{n+1} = 0$ as otherwise $(\mbf{x}^*, 0)$ or $(\mbf{x}^*, 1)$ violates one of these inequalities.
The tuple $(\mbfs{\pi}, \pi_0)$ provides a `yes' answer to (DI). 
\qed
\end{proof}

Our next result is motivated by recent studies of the minimal sizes of BB trees, e.g. \cite{DDMS2021,PSW2022}.
In the following theorem, we show that tree compression does not always yield the smallest tree for a given dual bound.

\begin{theorem}\label{thLowerBound}
Let $\mcf{D} = \mbb{Z}^n$. For $n \ge 2$, there exists a polytope $\mcf{P} \subseteq \mbb{R}^{n+1}$ and a BB tree $T$ with root polyhedron $\mcf{P}$ such that 
\begin{enumerate}
    \item\label{item2} $|T|\ge 2^{n+1}-1$ and $d(T, \mbf{0}) = \infty$.
    \item\label{item3} $T$ cannot be compressed to a tree with fewer than $ \sfrac{(2^n-1)}{n}$ nodes.
    \item\label{item4} There exists a tree $T'$ with root $\mcf{P}$, $|T'| = 7$ and $d(T, \mbf{0}) = d(T', \mbf{0})$.  
\end{enumerate}
\end{theorem}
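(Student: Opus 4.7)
The plan is to follow the lift idea of Theorem~\ref{thNPComplete}: fix an integer-infeasible polytope $\mcf{S} \subseteq \mbb{R}^n$ whose BB-tree complexity under $\mbb{Z}^n$-disjunctions is at least $\sfrac{(2^n-1)}{n}$, lift it to $\mcf{P} \subseteq \mbb{R}^{n+1}$ by attaching two ``stalks'' at $y=0$ and $y=1$, and exploit the restriction $\mcf{D} = \mbb{Z}^n$---which forbids any compression from branching on the new coordinate $y$---to transfer the lower bound for $\mcf{S}$ to any compression of $T$.

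\emph{Construction.} First, I would invoke known constructions of pathological lattice-free polytopes (see, e.g.,~\cite{DT2020,DDM2022}) to fix a full-dimensional polytope $\mcf{S} \subseteq [0,1]^n$ with $\mcf{S} \cap \mbb{Z}^n = \emptyset$ and such that every BB tree with root $\mcf{S}$, dual bound $\infty$, and branching directions in $\mbb{Z}^n$ has at least $\sfrac{(2^n-1)}{n}$ nodes. Pick $\mbf{x}^* \in \mcf{S}$ with $x_1^* \notin \mbb{Z}$ and set
\[
\mcf{P} := \conv\bigl(\{(\mbf{x}^*, 0),(\mbf{x}^*, 1)\} \cup \{(\mbf{x}, \sfrac{1}{2}) : \mbf{x} \in \mcf{S}\}\bigr),
\]
exactly as in the proof of Theorem~\ref{thNPComplete}. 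A direct verification shows that the projection of $\mcf{P}$ onto the $\mbf{x}$-coordinates equals $\mcf{S}$, which is the pivot of the argument. Let $T$ be the complete binary BB tree of depth $n$ obtained by applying the variable disjunctions $(x_i \le 0) \lor (x_i \ge 1)$ for $i = 1, \ldots, n$ in order; then $|T| = 2^{n+1}-1$ and every leaf polytope projects to $\mcf{S} \cap \{\mbf{x} = \mbf{b}\} = \emptyset$ for some $\mbf{b} \in \{0,1\}^n$, giving $d(T, \mbf{0}) = \infty$ and hence the first item. For the third item, I would reuse the 7-node tree from the proof of Theorem~\ref{thNPComplete}: branch at the root on $(y \le 0) \lor (y \ge 1)$ and at both children on $(x_1 \le \lfloor x_1^* \rfloor) \lor (x_1 \ge \lceil x_1^* \rceil)$. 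The four leaves are empty because only $(\mbf{x}^*, 0)$ and $(\mbf{x}^*, 1)$ survive the $y$-branch and $x_1^* \notin \mbb{Z}$ eliminates both via the $x_1$-branch. Note that this tree uses a direction outside $\mcf{D}$, so it is a valid BB tree but not a compression of $T$.

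\emph{Compression lower bound.} Let $T^*$ be any compression of $T$. Since $d(T^*, \mbf{0}) \ge d(T, \mbf{0}) = \infty$, every leaf of $T^*$ has empty polytope. Every branching direction appearing in $T^*$ lies in $\mbb{Z}^n$: the disjunctions inherited from $T$ are $\mbf{e}^1, \ldots, \mbf{e}^n$, and those produced by replace operations are in $\mcf{D}$ by hypothesis. Consequently, for every node $v \in T^*$ one has $\mcf{Q}(v) = \mcf{P} \cap (H_v \times \mbb{R})$ for some polyhedron $H_v \subseteq \mbb{R}^n$, and the projection onto the $\mbf{x}$-coordinates equals $H_v \cap \mcf{S}$; in particular $\mcf{Q}(v) = \emptyset$ if and only if $H_v \cap \mcf{S} = \emptyset$, because the projection of $\mcf{P}$ is exactly $\mcf{S}$. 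Replacing each $\mcf{Q}(v)$ by $H_v \cap \mcf{S}$ therefore turns $T^*$ into a BB tree $\tilde{T}^*$ with root $\mcf{S}$, identical underlying graph, only $\mbb{Z}^n$-disjunctions, and every leaf empty. By the choice of $\mcf{S}$, $|T^*| = |\tilde{T}^*| \ge \sfrac{(2^n-1)}{n}$, establishing the second item.

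The single substantive ingredient is the first step: exhibiting $\mcf{S} \subseteq [0,1]^n$ meeting the explicit lower bound $\sfrac{(2^n-1)}{n}$ for BB trees with $\mbb{Z}^n$-disjunctions relies on a nontrivial structural result, typically obtained through lattice-free polytopes of large lattice-width. With such an $\mcf{S}$ fixed, the remainder is a compact lift-and-project reduction whose correctness rests entirely on the restriction $\mcf{D} = \mbb{Z}^n$ and the identity between the projection of $\mcf{P}$ and $\mcf{S}$.
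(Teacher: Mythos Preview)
Your argument rests on reading ``$\mcf{D}=\mbb{Z}^n$'' as ``directions with zero $(n{+}1)$-st coordinate,'' so that no compression step may touch the lifted variable $y$. That is not the intended statement: the paper's proof explicitly allows the replace operation to use disjunctions $(\mbfs{\pi}^\top\mbf{x}+\pi_{n+1}x_{n+1}\le\pi_0)\lor(\mbfs{\pi}^\top\mbf{x}+\pi_{n+1}x_{n+1}\ge\pi_0+1)$ with $\pi_{n+1}\neq 0$ and spends the bulk of the argument ruling these out. In other words, $\mcf{D}$ is meant to be all integer directions in the ambient space $\mbb{R}^{n+1}$; the ``$\mbb{Z}^n$'' in the statement is notational slack. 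Under the correct reading your projection step collapses: once a replace operation may use the $y$-coordinate, the projected tree $\tilde{T}^*$ is no longer a BB tree for $\mcf{S}$, and the lower bound for $\mcf{S}$ does not transfer.

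Your construction also does not survive the correct hypothesis. With only the single stalk over $\mbf{x}^*$, the vertical segment $\{(\mbf{x}^*,\alpha):\alpha\in[0,1]\}$ lives in $\mcf{Q}(r)$ but typically not in $\mcf{Q}(v)$ for deeper nodes $v$ (e.g.\ after the branch $x_1\le 0$ when $x_1^*\in(0,1)$), so nothing prevents a general $\mbb{Z}^{n+1}$-disjunction from emptying both children at such $v$. The paper's fix is precisely to plant many stalks: starting from a minimal infeasibility tree $\overline{T}$ for a polytope $\overline{\mcf{P}}\subseteq[0,1]^n$ with $|\overline{T}|\ge 2^{n+1}-1$, a pigeonhole over the $n$ coordinates yields an index $i^*$ and a set $\overline{N}$ of at least $(2^n-1)/n$ non-leaf nodes, each containing a witness point $\mbf{x}(\overline{v})$ with $x(\overline{v})_{i^*}\in(0,1)$; one then sets $\mcf{P}=\conv\bigl(\{(\mbf{x}(\overline{v}),t):\overline{v}\in\overline{N},\,t\in\{0,1\}\}\cup(\overline{\mcf{P}}\times\{\sfrac12\})\bigr)$ and lifts $\overline{T}$ to $T$. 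Now every node $v$ corresponding to some $\overline{v}\in\overline{N}$ contains the full segment $\{(\mbf{x}(\overline{v}),\alpha):\alpha\in[0,1]\}$, which has width $\ge 1$ in any direction with $\pi_{n+1}\neq 0$, blocking replace there; and replace with $\pi_{n+1}=0$ is blocked by minimality of $\overline{T}$. This pigeonhole step is also where the bound $(2^n-1)/n$ actually comes from---it is not the assumed complexity of the base polytope, which is $2^{n+1}-1$.
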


\begin{proof}
Let $\overline{\mcf{P}} \subseteq [0,1]^n$ be a polytope satisfying $\overline{\mcf{P}} \cap \mbb{Z}^n = \emptyset$ and if a tree $\overline{T}$ with root $\overline{\mcf{P}}$ satisfies $d(\overline{T}, \mbf{0}) = \infty$, then $|\overline{T}| \ge 2^{n+1}-1$.
One such $\overline{\mcf{P}}$ comes from~\cite[Proposition 3]{DDM2022}.
Let $\overline{T}$ be a BB tree of minimal size with root $\overline{\mcf{P}}$ and $d(\overline{T}, \mbf{0}) = \infty$.
We will manipulate $\overline{T}$ and $\overline{\mcf{P}}$ to build the desired $\mcf{P}$, $T$ and $T'$.

The minimality of $\overline{T}$ and $d(\overline{T}, \mbf{0}) = \infty$ implies that a node $\overline{v} \in \overline{T}$ satisfies $\mcf{Q}(\overline{v}) = \emptyset$ if and only if $\overline{v} \in L(\overline{T})$.
Consider a non-leaf node $\overline{v} \in \overline{T}$; from the previous sentence, $\mcf{Q}(\overline{v}) \neq \emptyset$.
Moreover, given that $\overline{\mcf{P}} \cap \mbb{Z}^n = \emptyset$, the polyhedron $\mcf{Q}(\overline{v})$ is integer infeasible, i.e., $\mcf{Q}(\overline{v})\setminus  \mbb{Z}^n = \mcf{Q}(\overline{v}) \neq \emptyset$.
Putting all of this together with the assumption that $\overline{\mcf{P}} \subseteq [0,1]^n$, we can conclude that there exists an index $i_{\overline{v}} \in \{1, \dotsc, n\}$ and a point $\mbf{x} \in \mcf{Q}(\overline{v})$ with $x_{i_{\overline{v}}} \in (0,1)$.

There exist $\sfrac{(|\overline{T}|-1)}{2} \ge 2^n-1$ non-leaf nodes in $\overline{T}$.
Therefore, there exists an index $i^* \in \{1, \dotsc, n\}$ such that at least $\sfrac{(2^n-1)}{n}$ nodes $\overline{v} \in \overline{T}$ have some point $\mbf{x} \in \mcf{Q}(\overline{v})$ with $x_{i^*} \in (0,1)$.
We denote the set of these nodes as
\[
\overline{N} := \{\overline{v} \in \overline{T}:\ \exists~\mbf{x} \in \mcf{Q}(\overline{v}) ~\text{with}~x_{i^*} \in (0,1)\}.
\]
For each $\overline{v}  \in \overline{N}$, arbitrarily choose a point in $\mcf{Q}(\overline{v})$ whose ${i^*}$th component is in $(0,1)$ and call this point $\mbf{x}(\overline{v} )$.
Define
\[
\mcf{P} := \conv \left( \left\{\vphantom{\frac{1}{2}}(\mbf{x}(\overline{v} ), t):\ \overline{v}  \in \overline{N}~\text{and}~t \in \{0,1\}\right\}
\cup \left(\overline{\mcf{P}}\times \left\{\frac{1}{2}\right\}\right)\right).
\]
Note that $\mcf{P} \cap \mbb{Z}^{n+1} = \emptyset$.

We create a BB tree $T'$ with root polyhedron $\mcf{P}$ and $d(T', \mbf{0}) = \infty$ by first branching on $(x_{n+1} \le 0) \lor (x_{n+1} \ge 1)$; the polyhedra of the resulting children are $\conv\{(\mbf{x}(\overline{v}), t): \overline{v} \in \overline{N}\}$ for $t \in \{0,1\}$.
Given that $x(\overline{v})_{i^*} \in (0,1)$ for each $\overline{v}\in \overline{N}$, we can branch on each $\conv\{(\mbf{x}(\overline{v}), t):\ \overline{v} \in \overline{N}\}$ using $(x_{i^*} \le 0) \lor (x_{i^*} \ge 1)$ to obtain all empty children nodes. 
We illustrate tree $T'$ in Figure \ref{fig:size7tree}.
This proves~\ref{item4}.

\begin{figure}
\begin{center}
    \begin{tikzpicture}[scale = .75, every node/.style={scale=0.85}]
    \node(p)[draw = black!50, rectangle, rounded corners, thick] at (0,0){$\mcf{P}$};
    \node(q1)[draw = black!50, rectangle, rounded corners, thick] at (-4,-1){$\conv\{(\mbf{x}(\overline{v}), 0): \overline{v} \in \overline{N}\}$};
    \node(q2)[draw = black!50, rectangle, rounded corners, thick] at (4,-1){$\conv\{(\mbf{x}(\overline{v}), 1): \overline{v} \in \overline{N}\}$};
    \node(q3)[draw = black!50, rectangle, rounded corners, thick] at (-6,-2.5){$\emptyset$};
    \node(q4)[draw = black!50, rectangle, rounded corners, thick] at (-2,-2.5){$\emptyset$};
    \node(q5)[draw = black!50, rectangle, rounded corners, thick] at (2,-2.5){$\emptyset$};
    \node(q6)[draw = black!50, rectangle, rounded corners, thick] at (6,-2.5){$\emptyset$};
    %\node(q5)[draw = none, rectangle, rounded corners, thick] at (-1,-2){$\mcf{Q}_5$};
    \draw(p) to node[rotate = 12, above, pos = .5]{$x_{n+1} \le 0$}(q1);
    \draw(p) to node[rotate = -12, above, pos = .5]{$x_{n+1} \ge 1$}(q2);
    \draw(q1) to node[rotate = 30, above, pos = .6]{$x_{i^*} \le 0$}(q3);
    \draw(q1) to node[rotate = -30, above, pos = .6]{$x_{i^*} \ge 1$}(q4);
    \draw(q2) to node[rotate = 30, above, pos = .6]{$x_{i^*} \le 0$}(q5);
    \draw(q2) to node[rotate = -30, above, pos = .6]{$x_{i^*} \ge 1$}(q6);
    %\draw(q1)--(q4);
    \end{tikzpicture}
    \end{center}
    \caption{Branch and bound tree $T'$ constructed on the proof of Theorem \ref{thLowerBound}} \label{fig:size7tree}
\end{figure}
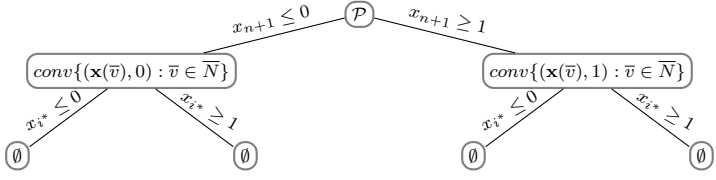

We define $T$ in the theorem by lifting $\overline{T}$.
More precisely, extend every disjunction $(\overline{\mbfs{\pi}}^\top \mbf{x} \le \pi_0) \lor (\overline{\mbfs{\pi}}^\top \mbf{x} \ge \pi_0+1)$ in $\overline{T}$ to a disjunction $(\mbfs{\pi}^\top \mbf{x} \le \pi_0) \lor (\mbfs{\pi}^\top \mbf{x} \ge \pi_0+1)$, where $\mbfs{\pi} := (\overline{\mbfs{\pi}}, 0)$.
Thus, $|T| = |\overline{T}| \ge 2^{n+1}-1$.
Furthermore, $\mcf{P} \subseteq \overline{\mcf{P}} \times \mbb{R}$, so $d(T, \mbf{0}) = \infty$ because $d(\overline{T}, \mbf{0}) = \infty$.
Thus, $T$ satisfies~\ref{item2}.

It remains to prove~\ref{item3}, i.e., that $T$ cannot be significantly compressed.
Assume that $T$ can be compressed via the drop operation.
The corresponding node in $\overline{T}$ can also be dropped.
However, this contradicts the minimality of $\overline{T}$.

We claim that if $v\in T$ corresponds to a node $\overline{v} \in \overline{N} $, then $T$ cannot be compressed at $v$ using the replace operation.
By contradiction, suppose there exists $v \in T$ corresponding to a node $\overline{v} \in \overline{N}$ and a disjunction $(\mbfs{\pi}^\top \mbf{x}+\pi_{n+1}x_{n+1} \le \pi_0) \lor (\mbfs{\pi}^\top \mbf{x}+\pi_{n+1}x_{n+1} \ge \pi_0+1)$ that we can use to compress $T$ at  $v$ via the replace operation, i.e., 
\[\mcf{Q}(v) \subseteq \{(\mbf{x}, \alpha) \in \mbb{R}^n\times \mbb{R}: \pi_0 < \mbfs{\pi}^\top \mbf{x}+\pi_{n+1}\alpha < \pi_0+1\} .\]

If $\pi_{n+1} = 0$, then this disjunction can be projected to $\overline{T}$ to compress it, contradicting the minimality of $\overline{T}$.
Therefore, $\pi_{n+1} \neq 0$.
By the construction of lifting of $\overline{T}$ to create $T$, we guarantee $(\mbf{x}(\overline{v}), \alpha) \in \mcf{Q}(v)$ for each $\alpha \in [0,1]$.
Hence, for each $\alpha \in [0,1]$, the point $(\mbf{x}(\overline{v}), \alpha)$ satisfies 
\(
\pi_0 - \mbfs{\pi}^\top \mbf{x}(\overline{v}) < \pi_{n+1}\alpha < \pi_0 - \mbfs{\pi}^\top \mbf{x}(\overline{v}) +1.
\) 
In particular, if we plug in $\alpha = 0$ and $\alpha = 1$, then we see that $\pi_0 - \mbfs{\pi}^\top \mbf{x}(\overline{v}) < \min \{0, \pi_{n+1}\} $, and $\max\{0,\pi_{n+1}\} < \pi_0 - \mbfs{\pi}^\top \mbf{x}(\overline{v}) +1 $.
If $\pi_{n+1} < 0$, then $\pi_{n+1} \le -1$; hence, $\pi_0 - \mbfs{\pi}^\top \mbf{x}(\overline{v}) \le -1$ and $-1 < \pi_0 - \mbfs{\pi}^\top \mbf{x}(\overline{v})$, which is a contradiction. 
If $\pi_{n+1} > 0$, then $\pi_{n+1} \ge 1$; hence, $\pi_0 - \mbfs{\pi}^\top \mbf{x}(\overline{v}) < 0$ and $0 \le \pi_0 - \mbfs{\pi}^\top \mbf{x}(\overline{v})$, which is a contradiction. 
In conclusion, the replace operation can only be applied to nodes in $T$ that do not correspond to nodes in $ \overline{N}$.

We have $|\overline{N}|\ge \sfrac{(2^n-1)}{n}$, so $T$ cannot be compressed to fewer than $\sfrac{(2^n-1)}{n}$ nodes, which proves~\ref{item3}.
\qed
\end{proof}

We believe an interesting direction in future work is to better understand the following high-level question: if a BB tree is generated using directions from a set $\mcf{D}$, then can it be compressed using allowable directions from a different set $\mcf{D}'$?
We briefly explore this question when $\mcf{D} = \mcf{D}'$ and the BB tree is created using strong branching with best bound selection.
The conclusion we make is that the drop operation is sufficient for compressing these BB trees.
In other words, the strong branching rule is strong enough to render the replace operation ineffective beyond the drop operation.

We say that a BB tree $T$ is built using {\it full strong branching with best bound selection based on $\mcf{D} \subseteq \mbb{Z}^n$} if $T$ is constructed in the following manner:
When branching, we apply a disjunction $(\mbfs{\pi}^\top \mbf{x} \le \pi_0) \lor (\mbfs{\pi}^\top \mbf{x} \ge \pi_0+1)$ to a leaf node $v \in L(T)$ that provides the greatest increase in dual bound among all tuples $(v, \mbfs{\pi}, \pi_0) \in L(T) \times \mcf{D} \times \mbb{Z}$~\cite{achterberg2005branching}.
Ties are broken arbitrarily.

\begin{proposition}\label{propStrongBranching}
Let $T$ be a BB tree generated using full strong branching with best bound selection based on $\mcf{D}\subseteq \mbb{Z}^n$.
Let $T'$ be a compression of $T$ (say $T_1 = T, T_2, \dotsc, T_k = T'$ is a sequence of BB trees that exhibits this compression) such that the following holds for all $i \in \{1, \dotsc, k-1\}$:
\[
\text{If $T_{i+1} = \replace(T_i, v,\mbfs{\pi}, \pi_0)$, then $\mbfs{\pi} \in \mcf{D}$.}
\]
That is, each replacement operation uses a direction in $\mcf{D}$.
Then the following properties hold:
\begin{enumerate}
    \item\label{SBProp1} The dual bound cannot improve during compression, i.e., $d(T, \mbf{c}) = d(T', \mbf{c})$.
    \item\label{SBProp2} There exists a compression $T''$ of $T$ with $|T''| \le |T'|$  that can be obtained using the drop operation exclusively.
\end{enumerate}
\end{proposition}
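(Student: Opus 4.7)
The plan is to prove both claims simultaneously by induction on the length $k$ of the compression sequence $T_1 = T, T_2, \dotsc, T_k = T'$, maintaining the joint invariant that $d(T_i, \mbf{c}) = d(T, \mbf{c})$ and that there exists a drop-only compression $T''_i$ of $T$ with $|T''_i| \le |T_i|$. The base case $k=1$ is immediate, taking $T''_1 = T$.

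For the inductive step, consider the operation producing $T_i$ from $T_{i-1}$. If it is $\drop(T_{i-1}, v)$, the dual bound cannot strictly rise because $\mathrm{LP}(v) \le \mathrm{LP}(\ell)$ for every leaf $\ell$ formerly below $v$; combined with the compression requirement $d(T_i) \ge d(T_{i-1})$, we obtain $d(T_i) = d(T_{i-1}) = d(T)$. The drop-only compression $T''_i$ is obtained from $T''_{i-1}$ by performing the same drop when $v$ is still a non-leaf there, or leaving it unchanged otherwise; this keeps $|T''_i| \le |T_i|$.

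The main content is the replace step $T_i = \replace(T_{i-1}, v, \mbfs{\pi}, \pi_0)$ with $\mbfs{\pi} \in \mcf{D}$, producing new children $c_1', c_2'$. When $v \in T$, the central tool is strong branching's optimality: the disjunction chosen to branch $v$ during the construction of $T$ maximized the immediate bound increase across all of $\mcf{D} \times \mbb{Z}$, yielding original children $c_1, c_2$ with
\[
\min(d'_v, \min\{\mathrm{LP}(c_1'), \mathrm{LP}(c_2')\}) \le \min(d'_v, \min\{\mathrm{LP}(c_1), \mathrm{LP}(c_2)\}),
\]
where $d'_v$ is the minimum LP over other leaves at the moment $v$ was branched. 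Coupling this with the elementary observations $d'_v \le d_{\mathrm{rest}}$ (the min LP over leaves outside $v$'s subtree in $T$, which can only rise as descendants of those leaves get branched further) and $\min\{\mathrm{LP}(c_1), \mathrm{LP}(c_2)\} \le d(T_v)$, a case analysis on whether the replaced subtree carries the binding constraint shows $d(T_i) \le d(T)$; the compression requirement then forces equality. For constructing $T''_i$, instead of the replace I drop below $v$'s original children $c_1, c_2$ in $T''_{i-1}$, leaving $v, c_1, c_2$ as three local nodes; since the replace also contributes exactly three nodes locally ($v, c_1', c_2'$), the inequality $|T''_i| \le |T_i|$ is preserved. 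The case $v \notin T$ (when $v$ was introduced by an earlier replace) is handled by tracing up to the nearest ancestor of $v$ that lies in $T$ and applying strong branching's property there.

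The hardest part will be the tie-breaking scenario at $v$: strong branching may have selected one of several disjunctions tied for best immediate increase, so an alternative in $\mcf{D}$ could yield $\min\{\mathrm{LP}(c_1'), \mathrm{LP}(c_2')\}$ strictly larger than $\min\{\mathrm{LP}(c_1), \mathrm{LP}(c_2)\}$. The subtle argument I expect is that in such ties $d(T)$ is already capped by some worst leaf outside $v$'s subtree — otherwise strong branching would have continued to extract further improvement elsewhere — so the replacement's local gain is absorbed into $d_{\mathrm{rest}}$ and does not lift $d(T_i)$ above $d(T)$.
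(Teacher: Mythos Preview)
Your inductive skeleton is sound, but you have misidentified where the work lies, and the two places you flag as subtle are in fact non-issues.

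First, the case $v \notin T$ never occurs. The replace operation only acts on non-leaf nodes and produces two \emph{leaves}; leaves can neither be replaced (by definition) nor dropped (the tree would not shrink). Hence every non-leaf node of every $T_i$ is a non-leaf node of the original $T$, with the same path to the root and the same polyhedron. There is nothing to ``trace up'' to.

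Second, your tie-breaking worry stems from using only the \emph{global} dual-bound optimality $\min(d'_v,\min\{\mathrm{LP}(c_1'),\mathrm{LP}(c_2')\}) \le \min(d'_v,\min\{\mathrm{LP}(c_1),\mathrm{LP}(c_2)\})$. The paper instead uses the \emph{local} characterization of full strong branching: the disjunction chosen at $v$ maximizes $\min\{\mathrm{LP}(c_1),\mathrm{LP}(c_2)\}$ over all $(\mbfs{\pi},\pi_0)\in\mcf{D}\times\mbb{Z}$. This gives directly
\[
\min\{\mathrm{LP}(c_1'),\mathrm{LP}(c_2')\}\ \le\ \min\{\mathrm{LP}(c_1),\mathrm{LP}(c_2)\},
\]
with no $d'_v$ in sight and no tie-breaking subtlety: a ``tie'' means equality of these minima, not that an alternative could be strictly better. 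With this inequality, a short contradiction argument (assume $d(T_{i-1},\mbf{c})<d(T_i,\mbf{c})$, note the binding leaf of $T_{i-1}$ must lie below $v$, and chain $\min\{\mathrm{LP}(c_1),\mathrm{LP}(c_2)\}\le d(T_{i-1},\mbf{c})<d(T_i,\mbf{c})\le\min\{\mathrm{LP}(c_1'),\mathrm{LP}(c_2')\}$) finishes Property~\ref{SBProp1} immediately. Your proposed resolution (``$d(T)$ is already capped by some worst leaf outside $v$'s subtree'') is not correct in general: the other leaves present when $v$ was branched may themselves be branched later, so $d'_v$ need not cap $d(T,\mbf{c})$.

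For Property~\ref{SBProp2}, the paper does essentially what you propose---replace the ``replace at $v$'' by drops at the original children $c_1,c_2$---and the same local inequality shows $d(T',\mbf{c})\le d(T'',\mbf{c})$, hence $d(T'',\mbf{c})\ge d(T,\mbf{c})$, which (combined with the fact that drops never raise the bound) validates each intermediate drop. Your inductive bookkeeping of $|T''_i|\le|T_i|$ is correct once you note that $v,c_1,c_2$ are always present in $T''_{i-1}$ (no prior operation touched $v$ or an ancestor, else $v$ would not be a non-leaf of $T_{i-1}$).
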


\begin{proof}
As a first case, suppose that $T' = \replace(T, v, \mbfs{\pi}, \pi_0)$.
For each polyhedron $\mcf{R}\subseteq \mbb{R}^n$, set 
\(
{\rm val}(\mcf{R}) := \min \{\mbf{c}^\top\mbf{x}: \mbf{x} \in \mcf{R}\}.
\)
Let us first show $d(T, \mbf{c}) = d(T', \mbf{c})$. By contradiction, suppose that $d(T, \mbf{c}) < d(T', \mbf{c})$.
%.
Let $\{v_1, v_2\}$ denote the children of $v$ in $T$, and let $\{v_3, v_4\}$ denote the children of $v$ in $T'$.
Since $T$ and $T'$ differ only in the subtree rooted at $v$ and $d(T, \mbf{c}) < d(T', \mbf{c})$, it must be that $d(T', \mbf{c}) = \min\{{\rm val}(\mcf{Q}(v_3)), {\rm val}(\mcf{Q}(v_4))\}$. Additionally, it follows that $d(T, \mbf{c}) ={\rm val}(\mcf{Q}(\hat{v}))$ for some descendant $\hat{v}$ of $v$ in $T$; this implies $\min\{{\rm val}(\mcf{Q}(v_1)), {\rm val}(\mcf{Q}(v_2))\} \le{\rm val}(\mcf{Q}(\hat{v}))$. Therefore,
\begin{align*}
\min\{{\rm val}(\mcf{Q}(v_1)), {\rm val}(\mcf{Q}(v_2))\} &\le {\rm val}(\mcf{Q}(\hat{v}))&\\
&= d(T, \mbf{c})\\
&< d(T', \mbf{c})\\
&= \min\{{\rm val}(\mcf{Q}(v_3)), {\rm val}(\mcf{Q}(v_4))\}.
\end{align*}
On the other hand, since $v_1$ and $v_2$ were created using full strong branching, we have $\min\{{\rm val}(\mcf{Q}(v_3)), {\rm val}(\mcf{Q}(v_4))\} \le \min\{{\rm val}(\mcf{Q}(v_1)), {\rm val}(\mcf{Q}(v_2))\}$. 
However, this is a contradiction.
Hence, $d(T, \mbf{c}) = d(T', \mbf{c})$.

Now, let $T''$ be the BB tree obtained by applying the drop operation to $v_1$ and then $v_2$ in $T$. 
Clearly $|T''|\leq |T'|$. 
By the strong branching rule, $\min\{{\rm val}(\mcf{Q}(v_3)), {\rm val}(\mcf{Q}(v_4))\} \le \min\{{\rm val}(\mcf{Q}(v_1)), {\rm val}(\mcf{Q}(v_2))\}$.
Hence, we have $d(T', \mbf{c}) \leq d(T'', \mbf{c})$. 
This implies $d(T'', \mbf{c}) \geq d(T, \mbf{c})$ and thus $T''$ is indeed a compression. Note that since
the drop operation cannot strictly improve the dual bound we have $d(T'', \mbf{c}) = d(T, \mbf{c})$.

The more general case, i.e., when $T'$ is a sequence of drop operations and replacements, follows from the fact that the replacement operation, which can only be applied to non-leaf nodes, only produces new leaf nodes that either remain leaf nodes or are removed in subsequent compression operations. 
\qed
\end{proof}

When combined, the properties in Proposition~\ref{propStrongBranching} imply that only the drop operation is needed to compress BB trees when every replace operation only uses directions from $\mcf{D}$.

%%%%%%%%%%%%%%%%%%%%%%%%%%%%%%%%%%%%%%%%%%%%
%%%%%%%%%%%%%%%%%%%%%%%%%%%%%%%%%%%%%%%%%%%%
\section{Compression algorithms}\label{secMethods}

While the previous section provides negative results in terms of the complexity of compressing trees or even just being able to compress significantly, these are worst-case results. In practice, we may still be able to compress effectively.
In this section, we introduce two compression algorithms, an exact method and a heuristic, which we later evaluate extensively.
Henceforth, we consider the set of allowable directions to be all integer vectors, i.e.,  $\mcf{D} = \mathbb{Z}^n$.

Let $T$ be a BB tree and $\mbf{c} \in\mbb{Q}^n$.
% the objective function.
% 
For both algorithms, the general approach we follow is:
% \begin{enumerate}
% \item 
(1) Traverse $T$ starting from the root.
We may skip leaves, since these are not compressible;
% 
% \item
(2) If the minimum of $\mbf{x}\mapsto\mbf{c}^\top\mbf{x}$ over $\mcf{Q}(v)$ is greater than or equal to $d(T, \mbf{c})$
%LP value of the current node $v$ is not stronger than the overall tree dual bound, 
then we apply $\drop(T,v)$;
% 
% \item
(3) Otherwise, we search for $(\mbfs{\pi}, \pi_0) \in \mbb{Z}^n\times \mbb{Z}$ such that $T'=\replace(T,v,\mbfs{\pi}, \pi_0) $ satisfies $d(T, \mbf{c}) \geq d(T', \mbf{c})$.
% \end{enumerate}
%
% 
In the following, we provide two methods for Step (3), which is the bottleneck of the procedure.

\subsection{An exact method} \label{sec:exactmethod}

A BB tree $\replace(T,v,\mbfs{\pi}, \pi_0)$ is a compression of $T$ if and only if 
\[ \min \{\mbf{c}^\top \mbf{x} :\ \mbf{x} \in \mcf{Q}(v),\, \mbfs{\pi}^\top \mbfs{x} \le \pi_0\} \geq d(T, \mbf{c}) \]
and
\[\min \{\mbf{c}^\top \mbf{x} :\ \mbf{x} \in \mcf{Q}(v),\, \mbfs{\pi}^\top \mbfs{x} \ge \pi_0 + 1 \} \geq d(T, \mbf{c}).
\]

Mahajan and Ralphs \cite{MR2009} propose a MIP formulation that can be used to find such $(\mbfs{\pi}, \pi_0)$; the main difference between their work and ours is that they used the MIP to find a general disjunction that could provide the best possible dual improvement when branching, but we can easily adapt it to our compression task.
The resulting model we use is
\begin{equation}\label{MRmodel}
\max_{\substack{\delta, \mbf{p}, \mbf{q}, \mbfs{\pi},\\ \pi_0, s_L, s_R }} \left\{ \delta :~~
\begin{array}{ll}
\mbf{A}^\top \mbf{p} - s_L \mbf{c} - \mbfs{\pi} = \mbf{0},
& \mbf{p}^\top \mbf{b} -  d(T, \mbf{c}) s_L - \pi_0 \geq \delta \\[.05 cm]
\mbf{A}^\top \mbf{q} - s_R \mbf{c} + \mbfs{\pi} = \mbf{0},
& \mbf{q}^\top \mbf{b} -  d(T, \mbf{c}) s_R - \pi_0 \geq \delta - 1 \\[.05 cm]
\mbf{p}, \mbf{q} \ge \mbf{0},\ s_L, s_R  \geq 0,& \mbfs{\pi} \in \mathbb{Z}^n,\ \pi_0 \in \mathbb{Z}
\end{array}
\right\}
\end{equation}
Any feasible solution with $\delta > 0$ produces a tuple $(\mbfs{\pi}, \pi_0)$ that we can use in the replace operation. 
Conversely, if no such $\delta$ exists, neither does a suitable disjunction; see \cite{MR2009}.
We note that in \cite{MR2009}, the authors fix $\delta$ to be a small constant and deal with a feasibility problem. In our case, we opted for an optimization version with $\delta$ variable.

Model \eqref{MRmodel} can be costly to solve in practice.
However, if given enough time, one can be certain that it will yield an algorithm capable of compressing $T$ as much as
possible; this will be of great use as a benchmark of compressibility.

\begin{algorithm}[t]
	\SetAlgoLined
	\text{\bf Input:}  $\mbf{A} \in \mbb{Q}^{m\times n}$ and $\mbf{b} \in \mbb{Q}^m$ defining a polytope \(\mcf{P} := \left\{\mbf{x} \in \mbb{R}^n:\ \mbf{A} \mbf{x} \le \mbf{b}\right\}\), an objective function $\mbf{c}$ and a BB tree $T$\;
	Let $L$ be a queue with the nodes of $T$\; \label{step:queue}
	\While{$L\neq \emptyset$}{
            $v = \pop(L)$\; 
            $T' = \drop(T,v)$\;
            \If{$|T'| < |T| \land d(T',\mbf{c}) \geq d(T,\mbf{c})$}{
            Remove all $w\in T\setminus T'$ from $L$\;
            $T\leftarrow T'$\; 
            \mbox{continue}\;}
            Solve problem \eqref{MRmodel} and obtain an optimal solution \((\delta, \mbf{p}, \mbf{q}, \mbfs{\pi}, \pi_0, s_L, s_R)\)\; \label{step:opt}
		\If{$\delta > 0$}{
            $T' =\replace(T,v,\mbfs{\pi}, \pi_0)$\;
            Remove all $w\in T\setminus T'$ from $L$\;
            $T\leftarrow T'$\; \label{step:last}}
		%\Else{
		%	\emph{break}\;
		%}
	}
	\vskip .1cm
	\KwResult{A compressed tree $T$}
	\caption{Exact compression algorithm}\label{alg:exact}
\end{algorithm}

In Algorithm \ref{alg:exact}, we formalize the exact compressibility method. We remark that, in practice, Step \ref{step:opt} is not necessarily solved to optimality, and an early stopping criterion can be implemented based on the value of $\delta$. In addition, we purposely leave the ordering for the node processing (Step \ref{step:queue}) to be ambiguous. 
Below we will specify different options to test in practice; these different options do not affect the correctness of the algorithm.

\subsection{A heuristic method\label{subsec:heur}}

As mentioned before, solving problem \eqref{MRmodel} can be costly; in some instances, even finding a solution with $\delta > 0$ is impractical. To alleviate this computational burden, we explore how to replace this optimization problem with a heuristic that can efficiently find a branching direction to be used in the replace operation.

Many heuristic methods for finding good branching directions have been proposed in the literature (e.g. ~\cite{CLN2011,GM2022,KC2011,OM2001}) and can be readily used for tree compression.
Here, we adapt a procedure in Owen and Mehrota \cite{OM2001} that iteratively improves variable directions by changing one coefficient at a time.

Our heuristic compression method follows Algorithm \ref{alg:exact}, but instead of executing steps \ref{step:opt}-\ref{step:last}, it performs the following.
Assume we have solved the LP relaxation at a node $v$.
The first step is to find the best variable direction $\mbfs{\pi} \in \{ \mbf{e}^1, \dotsc, \mbf{e}^n\}$.
Suppose $\mbfs{\pi}^\top \mbf{x} \leq \pi_0$ is the side of the disjunction with the smallest optimal value.
We add this constraint to the node LP and re-solve it to obtain a fractional solution $\overline{\mbf{x}}$.
For each fractional component $\overline{x}_i$, we then evaluate the branching directions $\mbfs{\pi} + \mbf{e}^i$ and $\mbfs{\pi} - \mbf{e}^i$.
If one of these directions yields a better dual bound than $\mbfs{\pi}$, then we replace $\mbfs{\pi}$ by it and repeat the procedure until $\mbfs{\pi}$ can no longer be improved.
At the end, if the bound provided by $\mbfs{\pi}$ is better than the tree bound, we apply $\replace(T, v, \mbfs{\pi}, \pi_0)$. 
We refer the reader to \cite{OM2001} for more details on this disjunction-finding procedure.

Unlike the exact method presented in Subsection~\ref{sec:exactmethod}, this iterative method provides no guarantees that a suitable disjunction will be found, even if it exists, and therefore may not achieve the best compression.
However, the iterative method is typically much faster.

\section{Computational experiments on MIPLIB 3.0 \label{secExperiments}}

In this section, we attempt to compress MIPLIB 3.0 trees using the methods described in the previous section.
Our main goal is to evaluate, without taking running time into consideration, how compressible are realistic BB trees generated by two commonly-used branching strategies --- \emph{full strong branching (FSB)} and \emph{reliability branching with plunging (RB)}.
Our secondary goal is to estimate how much of this compression can be achieved in shorter and more practical running times.
For these experiments, we chose MIPLIB 3.0, so that we could compute large FSB trees for all instances and could obtain accurate results for the exact compression method. % ; mainly, solve \eqref{MRmodel} in moderate running times.
This allows us to have a point of comparison for the more practical method given in Section \ref{subsec:heur}.
We consider more challenging instances in Section \ref{sec:miplib2017}.

\subsection{Methodology} \label{sec:methodology_miplib3}

For each branching strategy and for each MIPLIB 3.0 instance, we started by generating a BB tree using a custom textbook implementation of the BB method.
We used a custom implementation of the BB method, instead of exporting the tree generated by a commercial MIP solver, so that we could easily understand how exactly the tree is generated and control every aspect of the algorithm.
The implementation is written in Julia 1.8 and has been made publicly available as part of the open-source MIPLearn software package \cite{MIPLearn}.
It relies on an external LP solver, accessed through JuMP~\cite{DunningHuchetteLubin2017} and MathOptInterface~\cite{legat2021mathoptinterface}, to solve the LP relaxation of each BB node and to evaluate strong branching decisions.
In our experiments, we used Gurobi 9.5~\cite{Gurobi} with default settings as the LP solver.
When generating the trees, we provided the optimal value to the BB method and imposed a 10,000-node limit.
No time limit was imposed, and no presolve or cutting planes were applied.

After the trees were generated, they were then compressed by the exact and the heuristic methods described in Section~\ref{secMethods}.
Both methods were implemented in Python 3.10 and {\tt gurobipy}.
The nodes were traversed using depth-first search.
For the exact method, we imposed a 24-hour limit on the entire procedure and a 20-minute limit on each individual MIP.
For the heuristic method, we imposed a 15-minute limit on the entire procedure and no time limits on individual nodes.
All MIPs and LPs were solved with Gurobi 9.5 with default settings.
The experiments were run on a dedicated desktop computer (AMD Ryzen 9 7950X, 4.5/5.7 GHz, 16 cores, 32 threads, 128 GB DDR5), and 32 trees were compressed in parallel at a time; each compression was single-threaded.

\subsection{Full strong branching results \label{subsec:fsb}}

As described in Section \ref{secTheory}, \emph{full strong branching} (FSB) is a strategy that solves, at each node of the BB tree, two LPs per potential disjunction. Here, we consider the case of trees created using variable disjunction only, thus, this strategy solves two LPs for each fractional variable. Then, it picks the branching variable that presents the best overall improvement to dual bound~\cite{achterberg2005branching}.

FSB is often paired, as we do in our experiments, with \emph{best-bound node selection} rule, which always picks, as the BB node to process next, an unexplored leaf node that has minimal optimal value.
Although computationally expensive, FSB is typically considered the state-of-the-art branching strategy in terms of node count. Furthermore, as we showed in Proposition \ref{propStrongBranching}, these are trees that we can expect to be hard to compress unless the compression procedure considers more disjunctions than the ones used in the creation of the tree.

\begin{figure}[t]
    \includegraphics[width=\textwidth]{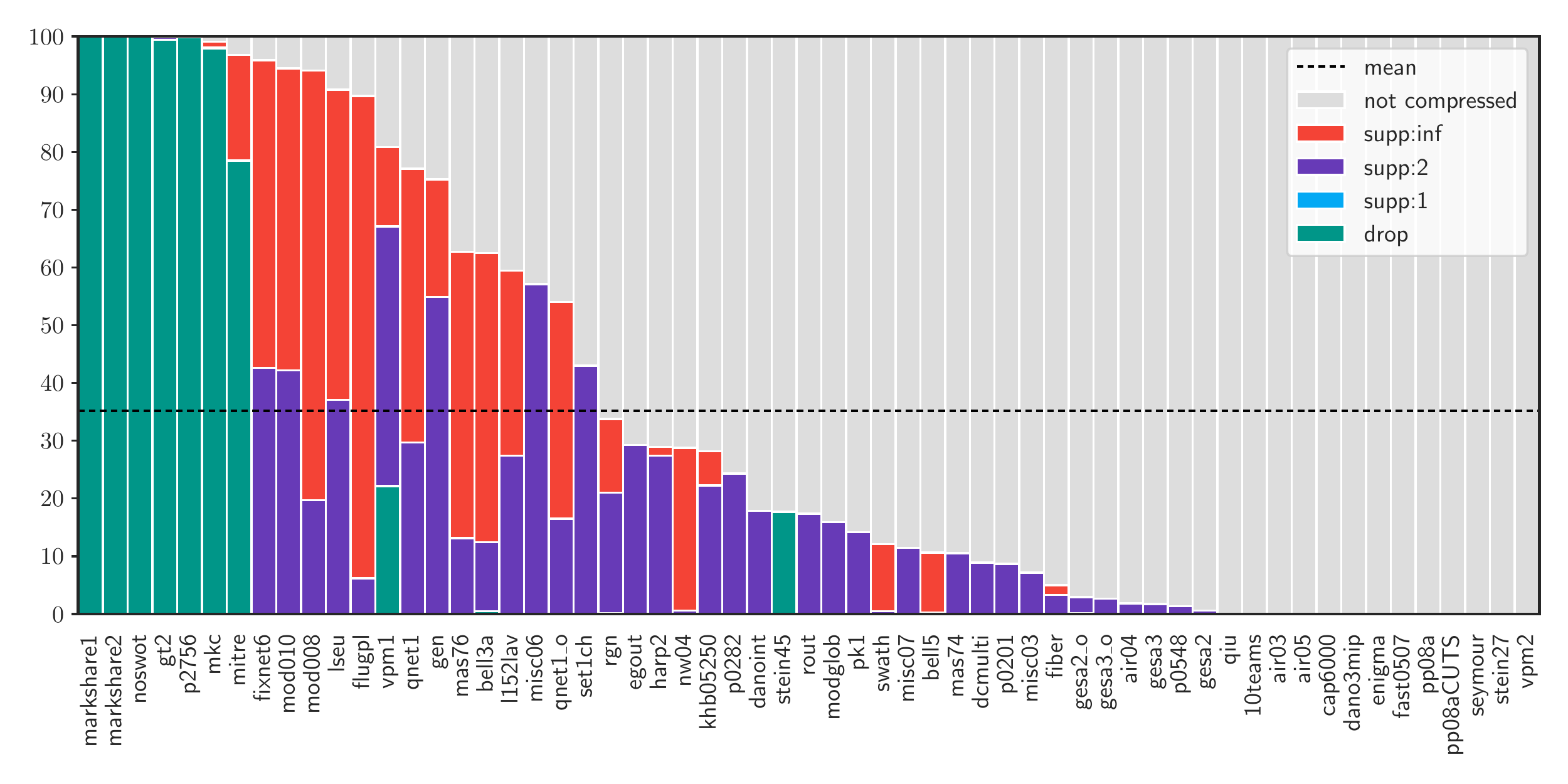}
    \caption{Compressibility of FSB trees (exact method, 24-hour limit). \label{fig:fsb-exact}}
\end{figure}

Figure~\ref{fig:fsb-exact} shows the exact compressibility of FSB trees under different restrictions on the support size of the disjunction.
Specifically, {\tt supp:inf} corresponds to the exact method based on Model~\eqref{MRmodel}, whereas {\tt supp:1} and {\tt supp:2} use the same model, but impose the additional constraint that at most 1 or 2 coefficients of $\mbfs{\pi}$, respectively, can be non-zero.
Method {\tt drop} is the method in which we are only allowed to drop nodes, not replace them.
In the chart, the compressibility of different methods is superimposed, with the weaker methods in the foreground and the stronger methods in the background. The y-axis indicates how small is the resulting tree, with larger values indicating higher compression.
For example, on instance {\tt vmp1}, methods {\tt drop}, {\tt supp:2} and {\tt supp:inf} were able to reduce the tree by 22.2\%, 67.1\% and 80.9\%, respectively.
Method {\tt supp:1} is not visible in the chart because it was not able to improve upon {\tt drop}.
% 
% The x-axis has been sorted by the performance of strongest method.
% 
The line shows the average compression obtained by the strongest method across all instances.

Our first insight from Figure~\ref{fig:fsb-exact} is that many FSB trees can be significantly compressed, despite the notorious tree-size efficiency of this branching rule.
On average, {\tt supp:inf} was able to reduce tree size by 35.2\%, with the ratio exceeding 50\% for 20 (out of 59) instances.
We also note, from the figure, that a large support size is required for obtaining the best results, although a restricted support size still provides significant compression.
On average, {\tt supp:2} compressed the trees by 24.0\%, which is still considerable, although being well below {\tt supp:inf}.
Method {\tt supp:1}, on the other hand, never outperformed {\tt drop};
this was expected in light of Proposition \ref{propStrongBranching}.
Also as a direct consequence of using the best-bound node selection rule, we observed that, for the vast majority of instances, few nodes could be dropped.
On average, {\tt drop} was only able to compress the trees by 12.1\% on average, with the compression being near zero for 50 instances.
Finally, despite the positive average compression results for {\tt supp:inf}, we note that a large number of trees could not be meaningfully compressed.
Specifically, {\tt supp:inf} presented a compression ratio below 5\% for 19 instances, which may indicate that trees for certain classes of problems are hard to compress.
Furthermore, {\tt supp:inf} took an exceedingly long average time of 47,153 seconds, with 25 instances hitting the 24-hour limit.\\

We now focus on more practical tree compression algorithms.
Figure~\ref{fig:fsb-heur} shows the performance of the heuristic method, outlined in Subsection~\ref{subsec:heur}, on the same BB trees, with a 15-minute limit.
We see that the heuristic method is able to obtain compression ratios comparable to {\tt supp:inf} in a reasonable amount of time.
On average, {\tt heuristic} took 493 seconds to run (95x faster than the exact method), and reduced tree size by 27.7\% (7.5 percentage points lower).
We conclude that FSB trees are compressible not only in a theoretical sense, but also in practice.
We also note that {\tt heuristic} outperformed {\tt supp:inf} for 12 instances, sometimes by a significant margin.
Notable examples include instances {\tt bell5}, {\tt bell3a}, {\tt vpm2}, {\tt p0282} and {\tt mas74}, where the margin exceeded 15 percentage points.
This is possible due to the time limits imposed on {\tt supp:inf}.

\begin{figure}[t]
    \resizebox{\textwidth}{!}{\includegraphics{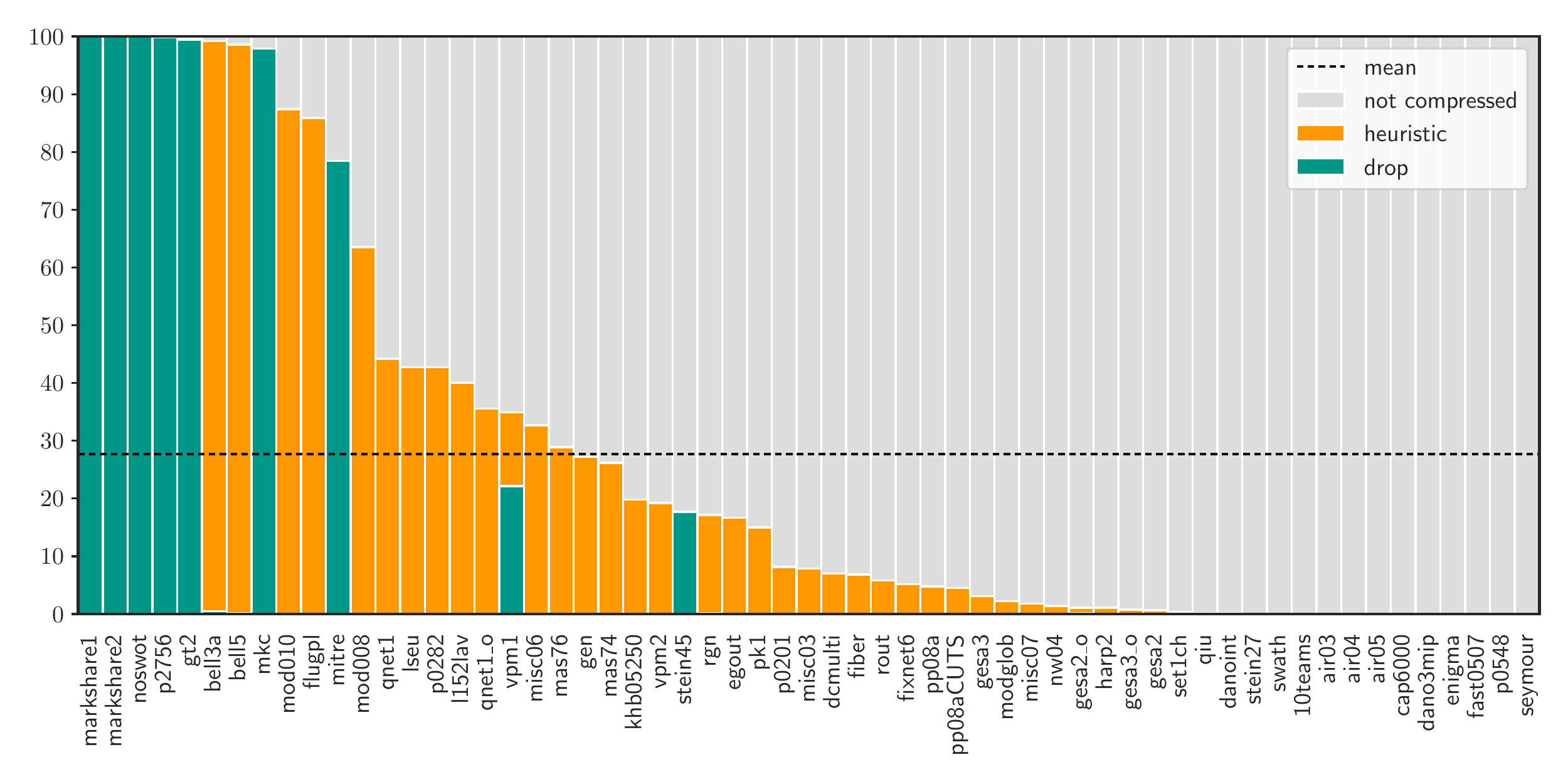}}
    \caption{Compressibility of FSB trees (heuristic method, 15-min limit). \label{fig:fsb-heur}}
\end{figure}

\subsection{Reliability branching with plunging}

\emph{Reliability branching} (RB) is a strategy that attempts to accelerate FSB by skipping strong branching computations for variables that already have reliable pseudocosts~\cite{achterberg2005branching}.
In our experiments, the pseudocost of a variable is considered reliable if it is based on 10 or more strong branching evaluations.
RB has been shown to perform well on a variety of real-world instances and it is often considered the state-of-the-art branching rule in terms of running time.
\emph{Plunging} is a modification to node selection which attempts to exploit the fact that sequentially solving two LPs that are similar can done much faster than solving two LPs that are significantly different.
When plunging is enabled, the BB method picks, as the node to explore next, one of the children of the most-recently explored node, falling back to \emph{best-bound node selection} when both children are pruned.
Our motivation for analyzing RB trees with plunging is that we expect such trees to resemble the ones generated by typical state-of-the-art MIP solvers.

Figure~\ref{fig:rbp-exact} shows the exact compressibility of RB trees, under different support size restrictions.
The first notable fact is that RB trees are, as expected, much more compressible than FSB trees.
On average, {\tt drop}, {\tt supp:1}, {\tt supp:2} and {\tt supp:inf} were able to reduce tree size by 51.9\%, 57.3\%, 61.5\% and 66.3\%, respectively.
Method {\tt supp:inf} presented compression ratio above 50\% for 42 (out of 59) instances, and above 80\% for 34 instances.
The strong performance of {\tt drop} can be directly attributed to plunging.
While the technique may be helpful when solving MIPs, we observed that it leads to the exploration of areas in the tree that do not contribute to its overall dual bound, and which can be dropped in a post-hoc analysis.
As with previous experiments, the best compression results were obtained with larger support sizes, although, in this case, the benefits of unbounded support were not as large as before, in relative terms.
Method {\tt supp:1}, unlike in previous experiments, provided significant compression in a number of instances (e.g. {\tt gen}, {\tt l152lav}, {\tt qnet1\_o}), and a modest average improvement over {\tt drop}.
We attribute this to suboptimal variable branching decisions made by RB, which is also expected.
As in the previous case, we note that {\tt supp:inf} failed to meaningfully compress a few instances, and it was overall prohibitively slow, requiring 45,256 seconds on average.

\begin{figure}[t]
    \resizebox{\textwidth}{!}{\includegraphics{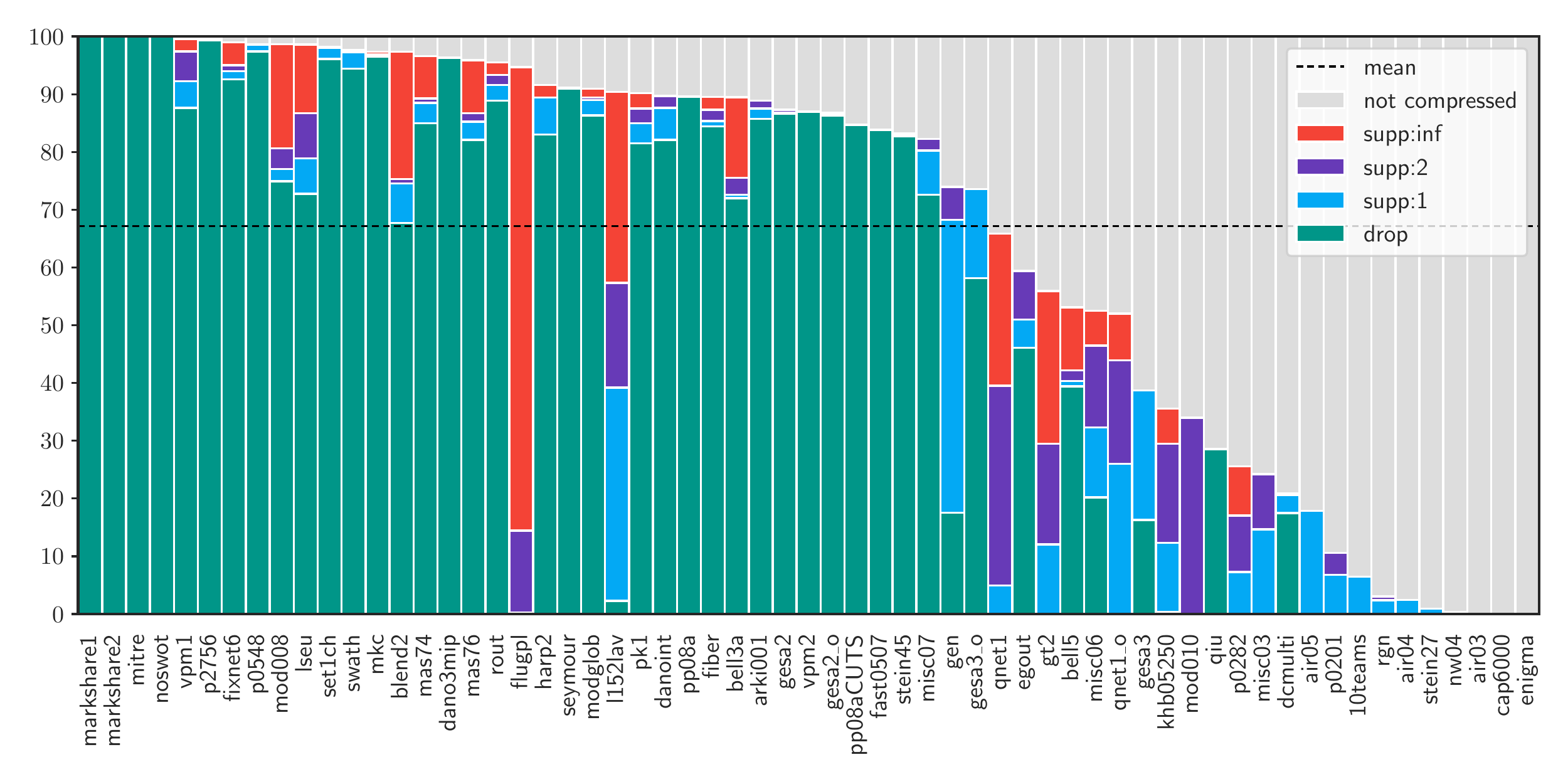}}
    \caption{Compressibility of RB trees (exact method, 24-hour limit). \label{fig:rbp-exact}}
\end{figure}

Finally, Figure~\ref{fig:rbp-heur} shows the performance of the heuristic method on RB trees.
Similarly to the results in the previous section, the heuristic method presented very strong performance, obtaining compression ratios that approached or even exceed those of the exact method, in much smaller running times.
Method {\tt heuristic} took an average of 335 seconds (134x faster) and obtained an average compression ratio of 63.7\% (2.5 percentage points lower).
We conclude that BB trees generated by node and variable selection rules that focus on MIP solution time tend to be highly compressible, in both a theoretical and a practical sense.

\begin{figure}[t]
    \resizebox{\textwidth}{!}{\includegraphics{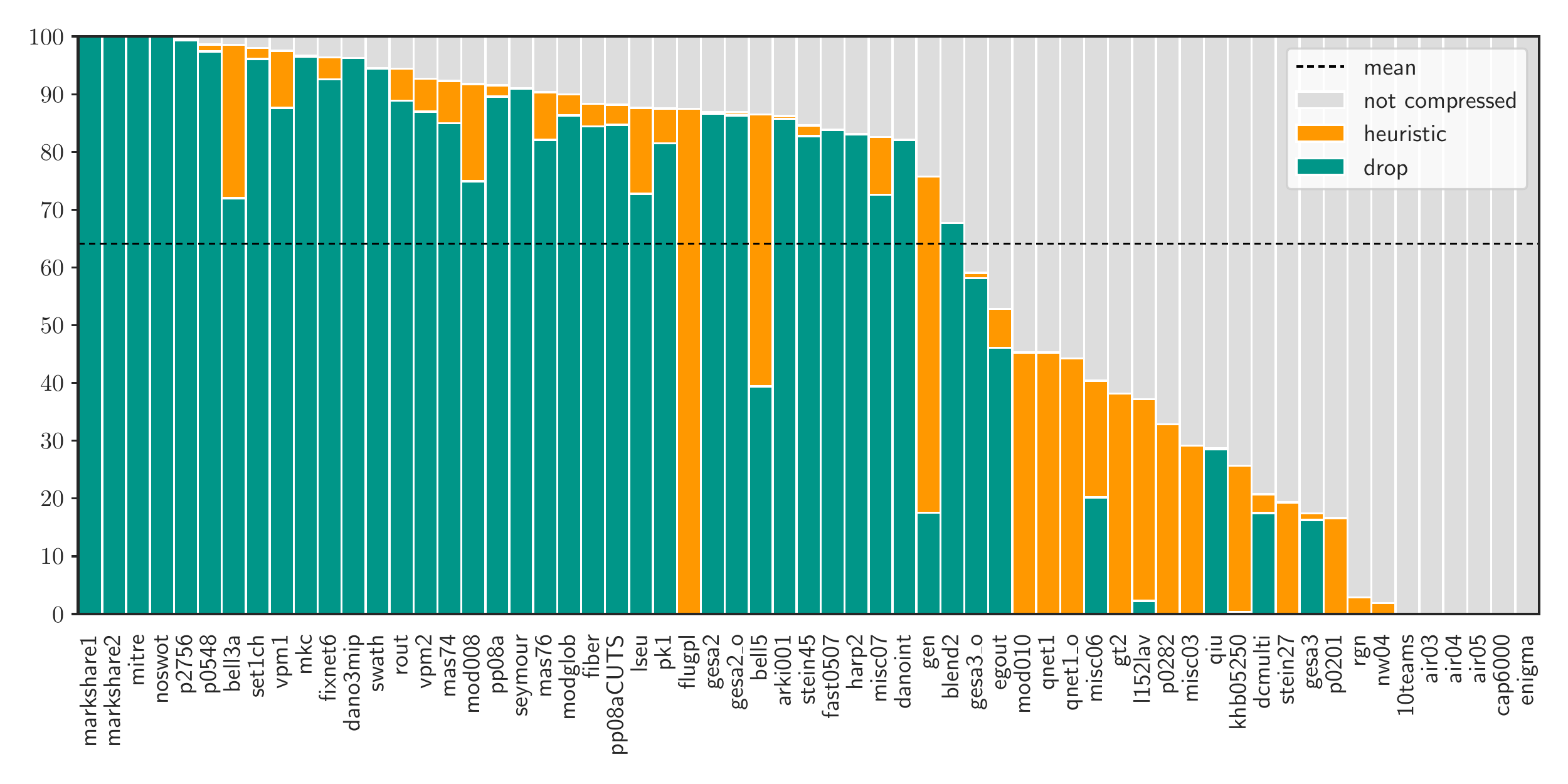}}
    \caption{Compressibility of RB trees (heuristic method, 15-min limit). \label{fig:rbp-heur}}
\end{figure}

%%%%%%%%%%%%%%%%%%%%%%%%%%%%%%%%%%%%%%%%%%%%
%%%%%%%%%%%%%%%%%%%%%%%%%%%%%%%%%%%%%%%%%%%%

\section{Computational experiments on MIPLIB 2017 \label{sec:miplib2017}}

In the previous section, we saw that BB trees generated by commonly-used branching rules are highly compressible, and that simple heuristic methods can obtain strong compression ratios in reasonable running times for relatively small-sized benchmark instances.
Here, we consider significantly larger instances, taken from the MIPLIB 2017 benchmark~\cite{miplib2017}.
For these instances, determining if a subtree at a given node is compressible becomes significantly more expensive, and visiting the nodes in depth-first order, as we did in the previous section, proves to be suboptimal, making the heuristic method take prohibitively long to run.

In this section, in an attempt to make the heuristic method practical for large instances, we explore different and potentially more efficient node orderings.
Since \emph{full strong branching} (FSB) is prohibitively expensive on MIPLIB 2017 instances, we compress \emph{reliability branching} (RB) trees without plunging.
For similar reasons, we omit the exact method from the analysis.
In Subsection~\ref{subsec:miplib2017-methodology}, we describe the experimental methodology, node orderings considered and evaluation metrics.
In Subsection~\ref{subsec:miplib2017-results}, we present the results.

\subsection{Methodology}
\label{subsec:miplib2017-methodology}

To generate the BB trees for the MIPLIB 2017 instances, we used the same hardware environment, programming language and computational tools as in the previous section.
To evaluate multiple node orderings fairly and effectively, we precomputed compressibility information for all trees.
Specifically, for each node of each tree, we ran the heuristic method described in Subsection~\ref{subsec:heur} with a 20-minute limit to determine whether the node is compressible, then stored the result along with the time required for the method to run.
Once this information was collected, we were able to quickly simulate the execution of the heuristic compression algorithm under different node orderings and global time limits.
Note that this methodology implicitly assumes that node processing time is deterministic, regardless of the ordering.
This was intentional, as it allows us to avoid  performance variability issues and focus solely on the effect of the ordering.
We compared six different ordering strategies, described below.

\medskip

\begin{tabular}{@{\hskip -0.35 cm}ll}
    \texttt{Random} & Nodes are visited randomly.\\[.1 cm]
    \texttt{DFS} & Nodes are visited in a depth-first search sequence, as in\\
    &Section \ref{secExperiments}.\\[.1 cm]
    \texttt{NodeId} & Nodes are visited according to their sequential numerical\\
    &identifier. Here the last created nodes are visited first, and\\
    &the root is visited last.\\[.1 cm]
    \texttt{SubtreeSize} & Nodes with smaller subtrees are visited first.\\[.1 cm]
    \texttt{Gap}& Nodes with LP objective value close to the global tree\\
    &bound are visited first. Specifically, nodes are sorted\\
    &according to \(|\text{GlobalBound} - \text{NodeBound}|\).\\[.1 cm]
    \texttt{Expert} & Nodes that are compressible are processed first, and ties are\\
    &broken by $\text{SubtreeSize}/\text{ProcessingTime}$, with higher values\\
    &being processed first.
\end{tabular}

\medskip

% \begin{description}
%     \item[\texttt{Random}] Nodes are visited randomly.
%     \item[\texttt{DFS}] Nodes are visited in a depth-first search sequence, as in Section \ref{secExperiments}.
%     \item[\texttt{NodeId}] Nodes are visited according to their sequential numerical identifier. Here the last created nodes are visited first, and the root is visited last.
%     \item[\texttt{SubtreeSize}] Nodes with smaller subtrees are visited first.
%     \item[\texttt{Gap}] Nodes with LP objective value close to the global tree bound are visited first. Specifically, nodes are sorted according to
%     \(|\text{GlobalBound} - \text{NodeBound}|\).
%     \item[\texttt{Expert}] Nodes that are compressible are processed first, and ties are broken by $\text{SubtreeSize}/\text{ProcessingTime}$, with higher values being processed first.
% \end{description}

The \texttt{Random} strategy is included as a baseline, while \text{DFS} is included to make the results in this section comparable to the ones presented previously.
The \texttt{NodeId} and \texttt{SubtreeSize} strategies are included to test the idea that nodes created later in the tree may be more easily processed, because they have a smaller feasible region, although compressing them may not lead to a significant reduction in tree size.
The \texttt{Gap} strategy exploits the hypothesis that nodes with objective values close to the global bound are likely to be compressible because the new disjunction only needs to be slightly stronger than the current one to be useful.
Finally, \texttt{Expert} provides us an estimate of how far from the theoretical best the other strategies are.
 
We stress that \texttt{Expert} cannot be implemented in practice, as it requires pre-computed knowledge of the compressibility of every node, which is not available in what would be a realistic execution of a compression algorithm.
However, it provides a meaningful point of comparison.
Breaking ties according to $\text{SubtreeSize}/\text{ProcessingTime}$ is akin to the well-known LP solution to a knapsack problem, where items are sorted according to the ratio between their profit and their weight, and then greedily added to the knapsack.
Here, the knapsack capacity would be the global time limit, item weights would be node processing times, and item profits would be the compression potential of a node (subtree size).
\texttt{Expert} therefore is an optimal node ordering strategy for a given compressibility information.

To evaluate the effectiveness of each ordering, we consider two metrics.
First, as in the previous section, we consider the compression ratio after a given time limit $T$.
Second, to capture the evolution of the tree size over time more accurately, we compute the \emph{normalized area under the curve} (AUC (\%)), given by
\[
    \text{AUC (\%)} = 100 \times \sum_{i=1}^n \frac{t_i {s_{i-1}}}{Ts_0},
\]
where $n$ is the number of processed nodes, $t_i$ is the processing time of the $i$-th node, $s_i$ is the size of the tree after processing the $i$-th node, $s_0$ is the original tree size, and $T$ is the global time limit.
Note that AUC (\%) $\in [0,100]$, with lower values indicating better performance.
For instance, if only one node is processed during the execution of the algorithm, then $t_1 = T$ and $\text{AUC} = 100$, which indicates that the tree remained the same size during the complete execution of the algorithm.

\subsection{Results}
\label{subsec:miplib2017-results}

\begin{table}[t]
    \begin{center}
    \caption{
        Average compression ratio and normalized AUC obtained under different node orderings and time limits for RB trees (without plunging) on MIPLIB 2017 instances.
    }
    \label{table:miplib2017-summary}
    \csvreader[
        tabular=lcccc,
        head to column names,
        table head=
            \toprule \multirow{2}{*}{Node Ordering} & AUC (\%) & \multicolumn{3}{c}{Compression Ratio (\%)} \\
            \cmidrule(r){2-2} \cmidrule(l){3-5} & {1-hour} & {15-min} & {1-hour} & {4-hour} \\\midrule,
        late after line=\\,
        late after last line=\\\bottomrule,
    ]{miplib2017-summary.csv}{}
    {\texttt{\csvcoli} & \csvcoliii & \csvcolv & \csvcolvi & \csvcolvii}
    \end{center}
\end{table}

Table~\ref{table:miplib2017-summary} presents a summary of the performance of each node ordering strategy under three different time limits.
In terms of AUC (\%), we see that \texttt{DFS} is the worst performing strategy, being outperformed even by our baseline strategy \texttt{Random}.
Strategies \texttt{SubtreeSize} and \texttt{NodeId} present similar performance, slightly outperforming the baseline.
Strategy \texttt{Gap} presents the best performance among the practical strategies, being significantly better than both \texttt{SubtreeSize} and \texttt{NodeId} on average.

Comparing compression ratios under a 4-hour limit, we see that all practical strategies, even the worst performing ones, can significantly compress the BB trees, with ratios ranging from 24\% to 30.5\%.
The similarity in performance is not very surprising since, given enough time, all orderings eventually lead to the same compressed tree size.
The importance of good node orderings, however, becomes evident under shorter time limits.
With a 1-hour time limit, \texttt{Gap} provides a significant 25.7\% compression, while \texttt{DFS} only achieves 17.1\%.
Under a 15-minute limit, the results are unfortunately much worse, with even the best performing practical strategy \texttt{Gap} achieving only 18.2\% compression.

Although \texttt{Gap} is the best strategy on average, it is not the best strategy for every single instance.
The ranking of the other strategies also varies across instances.
Figure~\ref{fig:miplib2017-bar} shows a more detailed comparison of the compression ratio obtained by different node orderings under a 1-hour limit, compared to the expert ordering.
For example, in instance \texttt{radiationm18-12-05}, \texttt{Gap} provides almost no compression, while \texttt{DFS} and \texttt{Random} reduces the tree size by nearly 100\%.
Other cases in which \texttt{Gap} is significantly outperformed by other strategies include \texttt{p200x1188c} (41 percentage points below the best), \texttt{swath3} (35 p.p.), \texttt{beasleyC3} (22 p.p.), \texttt{swath1} (14 p.p.) and \texttt{neos-1456979} (12 p.p.).
These results suggest that an ensemble of node ordering strategies could be used to further improve the compression ratio.
To further illustrate this point, Figure~\ref{fig:miplib2017-time} shows the progress of the compression algorithm over time for four selected MIPLIB 2017 instances.
Besides showing that particular strategies may be better suited for particular instances, the figure also shows that some strategies may be preferrable depending on the time limit.
In \texttt{gmu-35-40}, for example, \texttt{Random} is the best strategy under a 750-second limit, but \texttt{DFS} is better for longer time limits.
Similarly, in \texttt{csched007}, strategy \texttt{NodeId} outperforms \texttt{SubtreeSize} until around 1500 seconds, then becomes clearly worse at around 3500 seconds.

Looking at the performance of \texttt{Expert} allow us to assess the potential of better orderings.
Although \texttt{Gap} provides good compression, as discussed above, there is still a significant margin between its performance and \texttt{Expert}, which indicates room for improvement.
Under a 4-hour limit, \texttt{Expert} achieves a compression ratio of 35.1\%, which is similar to the 35.2\% achieved by the exact method on MIPLIB 3 FSB trees in Subsection~\ref{subsec:fsb}.
Assuming that the compressibility of the two sets of trees is roughly similar, these results could indicate that improving the node ordering, without any other further improvements to the heuristic algorithm, might be sufficient to achieve compression levels similar to the exact method, although this hypothesis would need to be confirmed by further experiments.
The 15-minute results for \texttt{Expert} are also encouraging, showing that it is theoretically possible to achieve around 30\% compression in very short time, even for large-scale MIP problems.

\begin{figure}
    \centering
    \resizebox{\textwidth}{!}{
        \includegraphics[width=\textwidth]{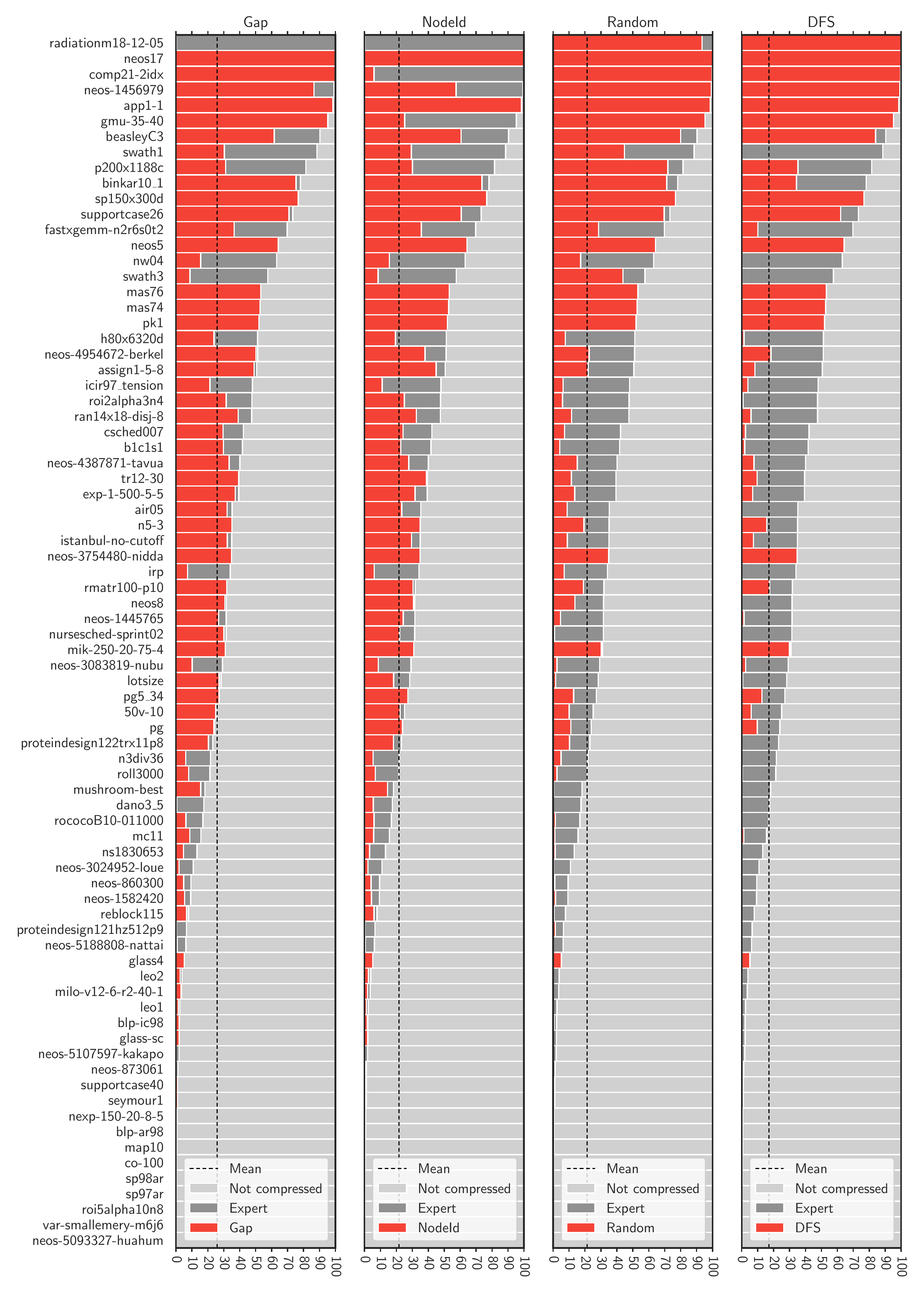}
    }
    \caption{
        Compression ratio under different node orderings and 1-hour limit for RB trees (without plunging) on MIPLIB 2017 instances, compared to the expert ordering.
        \label{fig:miplib2017-bar}
    }
\end{figure}

\begin{figure}
    \centering
    \includegraphics[scale=0.375]{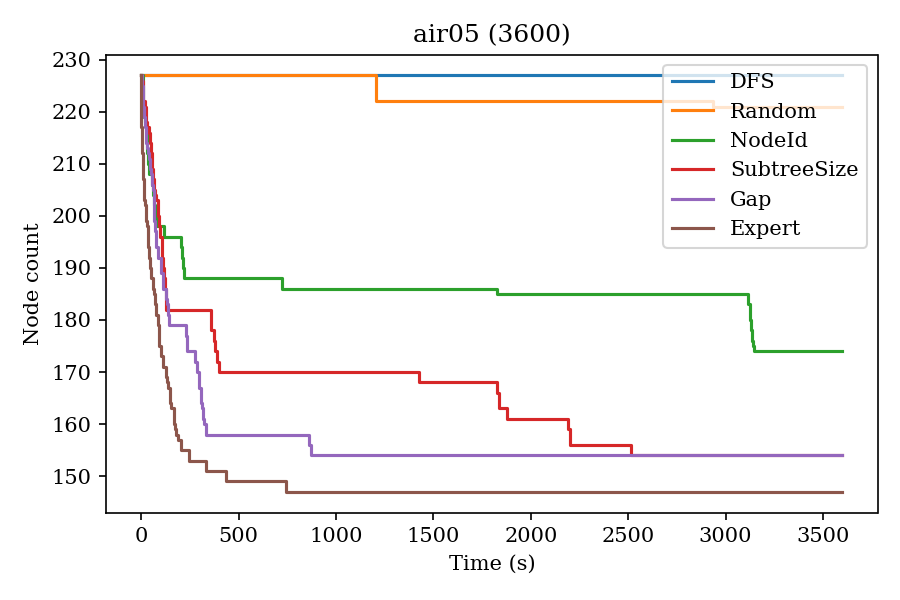}
    \includegraphics[scale=0.375]{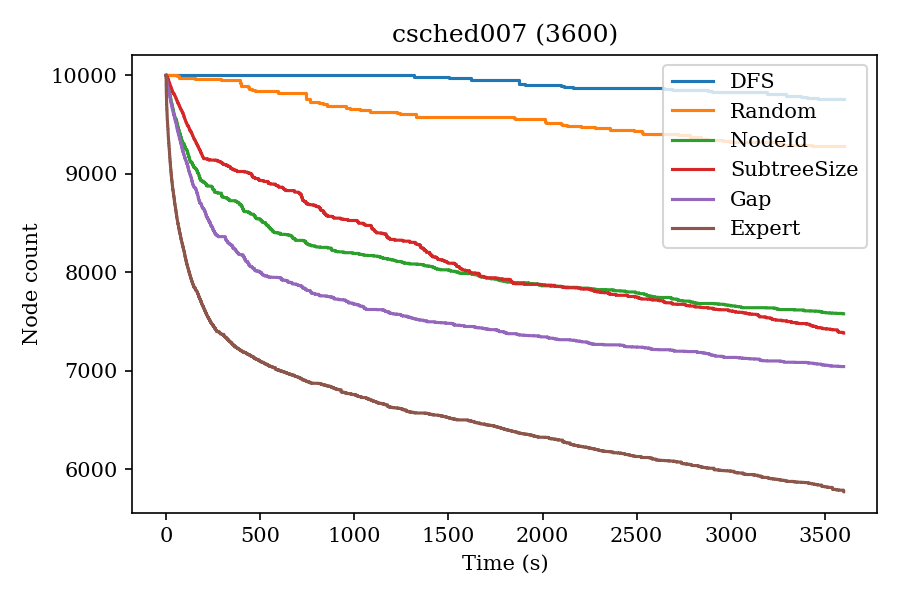}
    \includegraphics[scale=0.375]{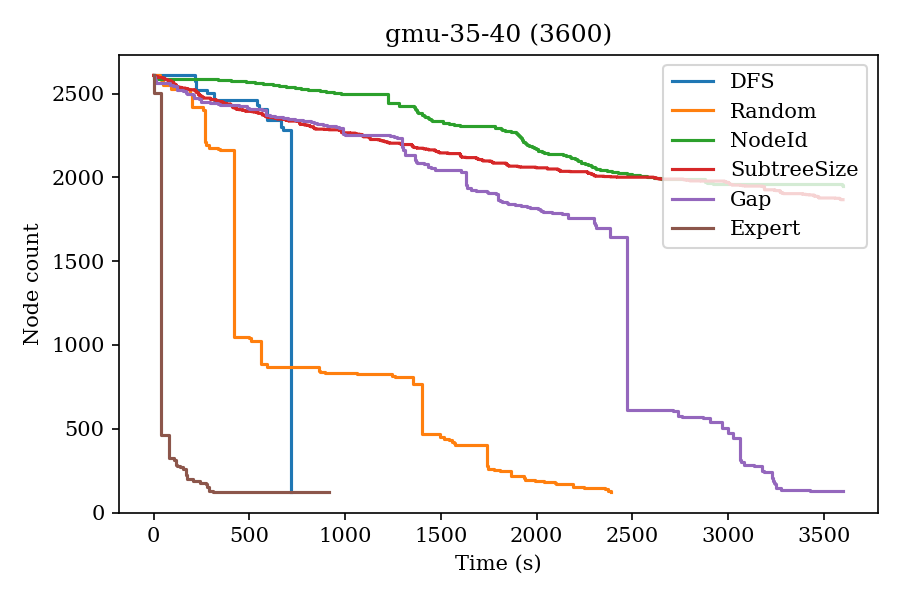}
    \includegraphics[scale=0.375]{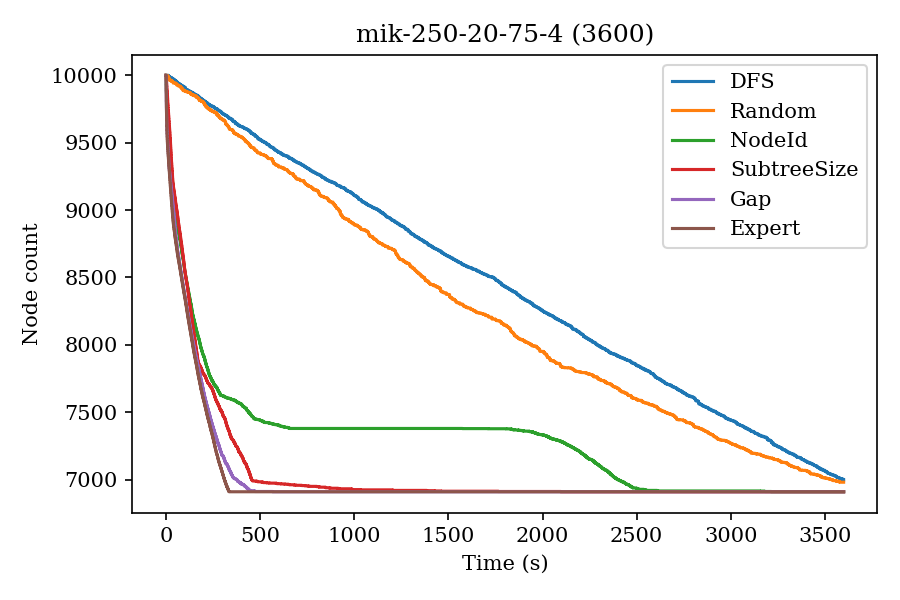}
    \caption{
        Tree size over time for four selected MIPLIB 2017 instances, under different node orderings and 1-hour limit.
    }
    \label{fig:miplib2017-time}
\end{figure}

%%%%%%%%%%%%%%%%%%%%%%%%%%%%%%%%%%%%%%%%%%%%
%%%%%%%%%%%%%%%%%%%%%%%%%%%%%%%%%%%%%%%%%%%%

\section{Conclusion \& Future Work}

We have formally introduced the tree compression problem, and we demonstrated through experiments how much trees can be compressed. 
There are many open questions that we believe warrant future research. 
First, is there a family of problems for which BB trees generated, say using strong branching, can be provably compressed?
Second, for a tree generated using branching directions in a set $\mcf{D}$, how compressible is the tree using directions in the Minkowski Sum $\mcf{D}+\mcf{D}$?
In particular when $\mcf{D}$ is the set of variable disjunctions, a positive result may indicate sparse disjunctions that are useful in a BB tree. 
This would complement our current computational results on disjunctions of support size $2$.
Third, given that the compression algorithm is based (partially) on general disjunctions which can be seen as splits, is there a relationship between the strength of split cuts at the root and the compressibility of a BB tree?
Finally, could the general disjunctions found by the compression algorithm be useful in solving similar MIP instances?

\medskip

\noindent{\bf Acknowledgements.} J. Paat was supported by a Natural Sciences and Engineering Research Council of Canada Discovery Grant [RGPIN-2021-02475]. \'A.S. Xavier was partially supported by the U.S. Department of Energy Office of Electricity.
%

%%%%%%%%%%%%%%%%%%%%%%%%%%%%%%%%%%%%%%%%%%%%
%%%%%%%%%%%%%%%%%%%%%%%%%%%%%%%%%%%%%%%%%%%%
\bibliographystyle{splncs04}
\bibliography{references}
\end{document}